\documentclass[a4paper,10pt]{article}

\usepackage{xcolor}
\usepackage{amsmath}
\usepackage{amssymb} 
\usepackage{amsthm}
\usepackage[latin1]{inputenc}

\newcommand{\R}{\mathbb{R}}
\newcommand{\bq}{\begin{equation}}
\newcommand{\eq}{\end{equation}}
\newcommand{\vo}{\mathrm{dvol}}
\newcommand{\ov}{\overline}
\newcommand{\s}{\mathrm{scal\,}}
\newcommand{\vol}{\mathrm{vol}}

\renewcommand{\d}{\mathrm{d}}

\setlength{\oddsidemargin}{0.3cm}
\setlength{\evensidemargin}{1cm}

\setlength{\hsize} {14cm}
\setlength{\textwidth}{14cm}
\setlength{\topmargin}{0.5cm}
\setlength{\topskip}{0cm}
\setlength{\parindent}{0cm}

\newtheorem{lem}{Lemma}
\newtheorem{cor}[lem]{Corollary}
\newtheorem{defi}[lem]{Definition}
\newtheorem{thm}[lem]{Theorem}

\theoremstyle{definition}
\newtheorem{ex}[lem]{Example}
\newtheorem{rem}[lem]{Remark}

\title{The Yamabe equation on manifolds of bounded geometry\footnotetext{MSC: 53A30, 35R01} \footnotetext{Keywords: Yamabe problem, open manifolds with bounded geometry, PDE}}
\author{Nadine Gro\ss e
\footnote{Faculty of Mathematics, University Leipzig, nadine.grosse@mathematik.uni-leipzig.de}
}
\date{}

\begin{document}
\maketitle

\begin{abstract}
We study the Yamabe problem on open manifolds of bounded geometry and show that under suitable assumptions there exist Yamabe metrics, i.e. conformal metrics of constant scalar curvature. For that, we use weighted Sobolev embeddings.
\end{abstract}

\section{Introduction}

In 1960 Yamabe considered the following problem that became famous as the Yamabe problem:\\

{\it Let $(M,g)$ be a closed Riemannian manifold of dimension $n\geq 3$. Does there exist a Riemannian metric $\ov{g}$ conformal to $g$ that has constant scalar curvature?}\\

This was answered affirmatively by Aubin \cite{Au98}, Schoen \cite{Sch84} and Trudinger \cite{Tr}.

The question can be reformulated in terms of positive solutions of the nonlinear elliptic differential equation:
\bq c u^{p_{crit}-1}=L_{g}u\label{el}, \quad\Vert u\Vert_{p_{crit}}=1,\eq
where $c$ is a constant, $L_{g}=a_n\Delta_{g}+\s_{g}$ with $a_n=4\frac{n-1}{n-2}$ is the conformal Laplacian and $\s_g$ the scalar curvature. We denote $\Vert u\Vert_p:=\Vert u\Vert_{L^p(g)}$ and set $p_{crit}=\frac{2n}{n-2}$. In the following we will omit the index referring to the metric, e.g. $L=L_g$.

If a positive solution $u$ exists, then the conformal metric $\ov{g}=u^\frac{4}{n-2}g$ has constant scalar curvature. Moreover, solutions of \eqref{el} can be characterized as critical points of the Yamabe functional 
\[Q_g(v)=\frac{\int_M vL_gv\vo_g}{\Vert v\Vert_{p_{crit}}^2}.\]
The infimum of the Yamabe functional $Q(M,g)=\inf \{ Q_g(v)\ |\ v\in C_c^\infty(M)\setminus\{0\}\}$ is called the \emph{Yamabe invariant} of $(M,g)$, where $C_c^\infty(M)$ denotes the set of compactly supported real valued functions on $M$. We note that $Q(M,g)$ is a conformal invariant \cite{Schoe}, i.e. for all $g, g'\in [g]=\{ \ov{g}= f^2g\ |\ f\in C_{>0}^\infty(M)\}$ we have $Q(M,g)=Q(M,g')$.\\

Since we take the infimum over all functions with compact support, the definition of the Yamabe invariant can also be used for noncompact manifolds.

What is often referred to as the noncompact Yamabe problem is the question:  Let $(M,g)$ be a Riemannian manifold of dimension $n\geq 3$. Does there exist a complete metric $\ov{g}$ conformal to $g$ that has constant scalar curvature that equals $Q(g)$?

The simplest counterexample is the standard Euclidean space since $Q(\R^n, g_E)>0$. In \cite{Zhir} it was shown that by deleting finitely many points of a closed manifold one can always construct such counterexamples.

Another way to consider a noncompact version of the Yamabe problem is to ask for a positive solution $u\in {H_1^2}\cap L^p$ of \eqref{el} on a noncompact complete manifold that minimizes the Yamabe functional. Here, $H_1^2=H_1^2(g)$ is the completion of $C_c^\infty(M)$ with respect to the norm $\Vert v\Vert_{H_1^2(g)}:=\Vert \d v\Vert_{L^2(g)}+\Vert v\Vert_{L^2(g)}$. The corresponding conformal metric $u^{\frac{4}{n-2}}g$ will have constant scalar curvature but will be in general not complete.\\

In this paper, we want to examine the existence of solutions of the Euler-Lagrange equation that minimize the Yamabe functional, i.e. we consider the second version of the noncompact Yamabe problem described above.\\
In \cite{Kim96}, this problem was studied for positive scalar curvature. In the proof, Aubin's inequality is used which was proofed in \cite[Thm. 9]{Au} for closed manifolds. Unfortunately, this inequality is not true for an arbitrary open manifold, but the proof of Aubin's inequality on closed manifolds carries over to  manifolds with bounded geometry. Recall that a Riemannian manifold $(M,g)$ is of bounded geometry if $g$ is complete and the curvature tensor and all its covariant derivatives are bounded. Thus, in the assumptions of \cite[Thm. 1]{Kim96} bounded geometry should be inserted to make the proof work.\\

In the following, we want to extend this result by relaxing the assumptions on the scalar curvature. Instead of assuming positive scalar curvature, we will assume that $\mu (M,g)$, the infimum of the $L^2$-spectrum of the conformal Laplacian w.r.t. the complete metric $g$ is positive,
i.e. 
\[ \mu (M,g)= \inf\left\{ \int_M vLv\,\vo_g\ \Big|\ v\in C_c^\infty(M),\ \Vert v\Vert_2=1 \right\}>0.\]  
 
\begin{thm}\label{Yam}
Let $(M^n,g)$ be a connected Riemannian manifold of bounded geometry with $\ov{{Q(M, g)}}>Q(M,g)$. Moreover, let $\mu(M,g)>0$. Then, there is a smooth positive solution $v\in H_1^2\cap L^\infty$ of the Euler-Lagrange equation $Lv=Q(M)v^{p_{crit}-1}$ with $\Vert v\Vert_{p_{crit}}=1$.
\end{thm}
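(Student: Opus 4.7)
The plan is to produce the Yamabe minimizer $v$ by a compactness argument, working against the two obstructions to compactness in $H_1^2\hookrightarrow L^{p_{crit}}$: local concentration of mass and loss of mass at infinity. The key analytic tool, as suggested by the abstract, is the existence of weighted Sobolev embeddings on manifolds of bounded geometry, which allow one to recover compactness in auxiliary variational problems whose minimizers then approximate the Yamabe minimizer.

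\textbf{Step 1: Approximate minimizers.} I would introduce an auxiliary family of minimization problems --- for instance by replacing the constraint $\int|v|^{p_{crit}}\vo_g=1$ either by a subcritical constraint $\int|v|^p\vo_g=1$ with $p<p_{crit}$, or by a weighted critical constraint $\int\phi_k|v|^{p_{crit}}\vo_g=1$ with a smooth decaying weight $\phi_k\nearrow 1$. In each case the compactness provided by the bounded geometry weighted embeddings produces a minimizer $v_k\in H_1^2$ with a Lagrange multiplier $c_k$ satisfying an approximate Euler--Lagrange equation of the form $Lv_k=c_k\phi_k|v_k|^{p_{crit}-2}v_k$ (or its subcritical analogue). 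One then checks $c_k\to Q(M,g)$ by direct comparison of test functions: since $\phi_k\le 1$, one has $c_k\ge Q_g(v_k)\ge Q(M,g)$; the upper bound $c_k\le Q(M,g)+o(1)$ follows by plugging any compactly supported near-minimizer of $Q_g$ into the $\phi_k$-constraint for $k$ large enough.

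\textbf{Step 2: Passing to the limit.} Since $\mu(M,g)>0$, the quadratic form $\int vLv\,\vo_g$ is equivalent to $\|v\|_{H_1^2}^2$, and the minimization bounds $(v_k)$ in $H_1^2$. Extract a weak subsequential limit $v$; the crucial point is that $v$ does not lose the $L^{p_{crit}}$-mass. A potential local concentration bubble would, via the Brezis--Lieb decomposition together with the sharp Euclidean Sobolev inequality in local normal coordinates (available because $(M,g)$ has bounded geometry, this is exactly the version of Aubin's inequality referred to in the introduction), force $Q(M,g)\ge\ov{Q(M,g)}$, contradicting the strict hypothesis $\ov{Q(M,g)}>Q(M,g)$. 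Loss of mass at infinity is the second obstruction: if mass escapes, one translates $v_k$ along suitable base points $p_k\in M$; by bounded geometry the pointed manifolds $(M,g,p_k)$ subconverge smoothly to a limiting manifold of bounded geometry on which the translates of $v_k$ converge to a nontrivial minimizer satisfying the Euler--Lagrange equation with the same constant $Q(M,g)$.

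\textbf{Step 3: Regularity and positivity.} The resulting $v$ is a nontrivial weak solution of $Lv=Q(M,g)v^{p_{crit}-1}$ with $\|v\|_{p_{crit}}=1$. Standard elliptic bootstrap on balls of fixed radius, uniform thanks to bounded geometry, promotes $v$ to $L^\infty\cap C^\infty$. Replacing $v$ by $|v|$ preserves the minimizing property, and the strong maximum principle applied to the linear equation $Lv-Q(M,g)|v|^{p_{crit}-2}v=0$ yields $v>0$ everywhere. The main obstacle is Step~2: the noncompactness analysis is the heart of the proof, and the three hypotheses $\ov{Q(M,g)}>Q(M,g)$, $\mu(M,g)>0$, and bounded geometry are precisely what is needed to defeat concentration and escape-of-mass in a coordinated way.
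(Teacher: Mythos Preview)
Your overall strategy (auxiliary weighted/subcritical problems, then pass to the limit) is in the spirit of the paper, but Step~2 contains a genuine gap and a misattribution of the hypothesis $\ov{Q(M,g)}>Q(M,g)$.

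\textbf{The gap: your mass-escape argument does not produce a solution on $M$.} If the sequence $v_k$ loses its $L^{p_{crit}}$-mass at infinity, you propose to translate by base points $p_k\to\infty$ and take a pointed Cheeger--Gromov limit. Bounded geometry indeed gives subconvergence of $(M,g,p_k)$ to some limit manifold $(M_\infty,g_\infty)$, and the translated $v_k$ may well converge to a nontrivial solution there. But $(M_\infty,g_\infty)$ is in general \emph{not} isometric to $(M,g)$; you have produced a solution on the wrong manifold, and Theorem~\ref{Yam} remains unproved. This translation trick works only under extra symmetry (this is exactly what the paper exploits separately in Theorem~\ref{isoYam} for almost homogeneous manifolds).

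\textbf{The misattribution.} You invoke $\ov{Q(M,g)}>Q(M,g)$ to rule out concentration. Local bubbling is actually excluded by the weaker consequence $Q(M,g)<Q(S^n)=Q(\R^n,g_E)$ (which does follow from $\ov{Q}>Q$ since $\ov{Q}\leq Q(S^n)$, but the operative inequality in the blow-up is against $Q(\R^n)$). The strict inequality $\ov{Q(M,g)}>Q(M,g)$ is precisely the tool that blocks \emph{escape to infinity}: in the paper, assuming the limit $v$ vanishes, one cuts off the approximate minimizers outside $B_r$ and shows
\[
Q(M,g)=\liminf \int_M v_\alpha L v_\alpha\,\vo_g \ \geq\ Q(M\setminus B_r,g)\quad\text{for every }r,
\]
hence $Q(M,g)\geq \ov{Q(M,g)}$, a contradiction. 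You have not supplied any argument of this type, so the hypothesis is never actually used where it is needed.

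\textbf{What the paper does differently.} The paper runs a two-parameter scheme: first solve the weighted \emph{subcritical} problem ($\alpha>0$, $p<p_{crit}$) by compactness of $H_1^2\hookrightarrow \rho^\alpha L^p$; then let $p\to p_{crit}$ at fixed $\alpha$ using a blow-up analysis around the maxima (Schoen's rescaling in normal coordinates, uniform by bounded geometry) to get $L^\infty$ bounds and a weighted \emph{critical} solution $v_\alpha$; finally let $\alpha\to 0$, repeat the blow-up to keep $L^\infty$ control, and use the $\ov{Q}>Q$ cutoff argument above to ensure $\Vert v\Vert_{p_{crit}}=1$. Your Step~1 captures the first move, and your regularity/positivity Step~3 is fine, but Step~2 needs to be rewritten so that the escape-of-mass obstruction is defeated on $M$ itself via $\ov{Q}>Q$, not sidestepped by passing to a limit space.
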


Here, $\ov{{Q}}$ denotes the Yamabe invariant at infinity, cf. Definition \ref{Yainf}. Note, moreover, that $\mu>0$ implies $Q>0$, see Lemma \ref{class}.\\
Our method to prove this theorem will be different to the one in \cite{Kim96}, where the noncompact manifold is exhausted by compact subsets. Then the solutions of the corresponding problem on these subsets form a sequence, and it is shown that under suitable assumptions this sequence converges to a global solution.\\
 We will use instead weighted Sobolev embeddings and, therefore, consider a weighted Yamabe problem:

\begin{defi}\label{wsY} Let $\rho$ be a radial admissible weight (cf. \cite[Def. 2]{Skrz}) with $0<\rho\leq 1$. The \emph{weighted subcritical Yamabe constant} of $(M^n,g)$ is defined as
\[ Q^\alpha_p(M,g)=\inf\left\{ \int_M vLv\,\vo_g\ \Big|\ v\in C_c^\infty(M),\ \Vert \rho^\alpha v\Vert_p=1 \right\} \]
where $\alpha\geq 0$ and $p\in[2,p_{crit})$, $p_{crit}=\frac{2n}{n-2}$. If $\alpha=0$, we simply write $Q_p$. 
\end{defi}

For our purpose, it will be sufficient to think of $\rho$ as the radial weight $e^{-r}$ where $r$ is smooth and near to the distance to a fixed point $z\in M$, cf. the Appendix \ref{app} Remark \ref{ch_weight}.

Note that $Q=Q^{\alpha=0}_{p=p_{crit}}$.\\

In Theorem \ref{isoYam}, we will show that for almost homogeneous manifolds (for the Definition see \ref{isoYam}) with uniformly positive scalar curvature one can drop the assumption on $\ov{{Q}}$. This was shown to the author by Akutagawa who proved this by exhaustion of the manifold at infinity, similarly as in \cite[Thm. C]{Aku}. Similar methods are used in \cite[Thm. 1.2]{A} where Akutagawa compares the Yamabe constant of a manifold $M$ with the Yamabe constant on an infinite covering of $M$.

Then, as an application we will apply this result in Example \ref{ex_ex} to products of spheres with hyperbolic spaces that are the noncompact model spaces that appear in the surgery results for the Yamabe invariant in \cite{ADH}.\\

In this paper, we will proceed as follows: In Section 2, we shortly give some general results and the definition of the Yamabe invariant at infinity. Everything that is needed on (weighted) Sobolev embeddings can be found in Appendix A. In Section 3, we will prove Theorem \ref{Yam} by considering a weighted subcritical problem.

The methods developped in this paper to prove existence of solutions of the Yamabe problem on manifold with bounded geometry were adapted to prove similar results for a spinorial Yamabe-type problem for the Dirac operator. That was done in \cite{NG12}.

\textbf{Acknowlegdement.} The author thanks Kazuo Akutagawa for giving many insights to the solutions of the Yamabe problem on noncompact manifolds and showing Theorem \ref{isoYam} which we reproved here by our method. Furthermore, I want to thank Bernd Ammann for many enlightening discussions and hints on weighted Sobolev embeddings.

\section{Preliminaries}

In the rest of the paper, let $(M,g)$ be an $n$-dimensional complete connected Riemannian manifold. In this section we focus on the Yamabe invariant and the Yamabe invariant at infinity. For statements on embeddings, especially on weighted Sobolev embeddings, we refer to Appendix \ref{app}.\\

In the following theorem, we will first collect some basic properties for the Yamabe invariant on manifolds (here not necessarily compact or complete but always without boundary) which we will need in the following, cf. \cite{Schoe}.

\begin{thm}\label{subset}
Let $\Omega_1\subset \Omega_2\subset M$ be open subsets of the Riemannian manifold $(M,g)$ equipped with the induced metric. Then $Q(\Omega_1, g)\geq Q(\Omega_2, g)\geq Q(M,g)$. Moreover,
\[Q(M,g)\leq Q(S^n, g_{st})=n(n-1) \omega_n^{\frac{2}{n}}\]
where $\omega_n$ is the volume of the standard sphere $(S^n, g_{st})$.\\
For any open subset $\Omega \subset S^n$ of the standard sphere, it is $Q(\Omega, g_{st})=Q(S^n, g_{st})$. In particular, the Yamabe invariants of the standard Euclidean and hyperbolic space coincide with the one of the standard sphere. 
\end{thm}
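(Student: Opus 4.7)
The plan is to handle the three assertions separately; the first and third are nearly free from the definitions, and essentially all of the real work sits in the universal upper bound $Q(M,g)\le Q(S^n,g_{st})$.

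\textbf{Monotonicity.} Since $\Omega_1\subset \Omega_2\subset M$ and all carry the induced metric, extending by zero gives inclusions $C_c^\infty(\Omega_1)\subset C_c^\infty(\Omega_2)\subset C_c^\infty(M)$, and the Yamabe functional $Q_g(v)=\int vLv\,\vo/\Vert v\Vert_{p_{crit}}^2$ depends only on the values of $g$ on $\mathrm{supp}\,v$. Taking the infimum over a larger test-function set can only decrease the value, which immediately gives $Q(\Omega_1,g)\ge Q(\Omega_2,g)\ge Q(M,g)$.

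\textbf{The upper bound $Q(M,g)\le Q(S^n,g_{st})$.} The strategy is the classical Aubin bubble test. First, compute $Q(S^n,g_{st})$ by evaluating $Q_{g_{st}}$ on the constant function $v\equiv 1$: since $\s_{g_{st}}=n(n-1)$ and $\vol(S^n,g_{st})=\omega_n$, we get $Q_{g_{st}}(1)=n(n-1)\omega_n^{2/n}$, and Aubin's/Obata's theorem identifies this as the infimum. Next, for arbitrary $(M,g)$, fix a point $p\in M$ and pass to conformal normal coordinates in a small ball around $p$, producing a metric $\tilde g\in[g]$ with $\tilde g=\delta+O(r^2)$; by conformal invariance of $Q$ it suffices to test against $\tilde g$. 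Then use the concentrating test functions
\[ u_\varepsilon(x)=\chi(x)\Bigl(\tfrac{\varepsilon}{\varepsilon^2+|x|^2}\Bigr)^{(n-2)/2},\]
where $\chi$ is a fixed cutoff supported in the coordinate ball. A direct calculation (Aubin/Schoen) shows that as $\varepsilon\to 0$ the perturbation of $\tilde g$ away from the Euclidean metric contributes only lower order, so $Q_{\tilde g}(u_\varepsilon)\to Q(\R^n,g_E)=Q(S^n,g_{st})$, hence $Q(M,g)\le Q(S^n,g_{st})$.

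\textbf{Open subsets of $S^n$ and the model spaces.} For any open $\Omega\subset S^n$, part one gives $Q(\Omega,g_{st})\ge Q(S^n,g_{st})$, while part two, applied to the Riemannian manifold $(\Omega,g_{st})$, gives $Q(\Omega,g_{st})\le Q(S^n,g_{st})$; equality follows. For the final statement, both $\R^n$ and $\H^n$ are globally conformal to open subsets of $S^n$: stereographic projection identifies $(\R^n,g_E)$ conformally with $(S^n\setminus\{pt\},g_{st})$, and the Poincar\'e-ball model identifies $(\H^n,g_{hyp})$ conformally with a round ball in $\R^n$, itself an open subset of $S^n$. Conformal invariance of the Yamabe invariant together with the previous equality gives $Q(\R^n,g_E)=Q(\H^n,g_{hyp})=Q(S^n,g_{st})$.

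\textbf{Main obstacle.} The only nontrivial step is the bubble computation for the universal bound: one must control the error terms coming from the cutoff $\chi$ (which breaks the exact scaling invariance) and from the $O(r^2)$ deviation of $\tilde g$ from the Euclidean metric, and check that both the numerator $\int u_\varepsilon L_{\tilde g}u_\varepsilon\,\vo$ and the denominator $\Vert u_\varepsilon\Vert_{p_{crit}}^2$ converge to the corresponding Euclidean quantities at the rate needed for $Q_{\tilde g}(u_\varepsilon)\to Q(S^n,g_{st})$. This is standard but is the step that one has to actually do rather than cite from the definitions.
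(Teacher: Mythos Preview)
The paper does not actually prove this theorem: it is stated as a collection of known facts with a reference to \cite{Schoe}, and no argument is given in the text. So there is no ``paper's own proof'' to compare against.

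That said, your outline is correct and is exactly the standard route one finds in the literature. The monotonicity is immediate from the definition via test functions; the universal upper bound is Aubin's bubble computation (and you have correctly identified this as the only place where real work happens); and the statement about open subsets of $S^n$ follows by squeezing between monotonicity and the universal upper bound applied to $(\Omega,g_{st})$ as a Riemannian manifold in its own right. The identification of $Q(\R^n,g_E)$ and $Q(\Hy^n,g_{hyp})$ with $Q(S^n,g_{st})$ via conformal invariance is also correct. One small point of care: when you write $Q_{\tilde g}(u_\varepsilon)\to Q(\R^n,g_E)=Q(S^n,g_{st})$, make sure you are not invoking the equality $Q(\R^n,g_E)=Q(S^n,g_{st})$ circularly; the cleanest way is to observe that the bubble limit is the sharp Sobolev constant on $\R^n$, which is known independently (Talenti/Aubin) to equal $n(n-1)\omega_n^{2/n}$.
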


In the sequel, we will left out the metric in the notation of $Q$ if it is clear from the  context to which metric we refer to, e.g. in case of the standard sphere we just write $Q(S^n)$.\\
We further need the Yamabe constant at infinity.

\begin{defi}\label{Yainf}  (see \cite{Kim}) Let $z\in M$ be a fixed point. We denote by $B_R\subset M$ the ball around $z$ w.r.t. the metric $g$  with radius $R$. Then,
\[ \ov{Q(M,g)}:=\lim_{R\to \infty} Q(M\setminus B_R, g). \]
\end{defi}

The limit always exists since with Theorem \ref{subset} we have $Q(M\setminus B_{R_1},g)\leq  Q(M\setminus B_{R_2},g)\leq Q(S^n, g_{st})$ for $R_1\leq R_2$. Hence, $\ov{Q(M)}\geq Q(M)$. Moreover, the definition is independent of the point $z$.

\section{Solution of the Euler-Lagrange equation}

The main aim of this section is to prove Theorem \ref{Yam}. For that, we start by considering the weighted subcritical problem. Firstly, we will prove the existence of solutions of this weighted subcritical problem, i.e. solutions to the corresponding Euler-Lagrange equation, see Lemma \ref{weisub}. Then, the convergence of these solutions will be achieved in two steps: At first, we fix the weight $\rho^\alpha$ and let the subcritical exponent ($p<p_{crit}$) converge to the critical one, cf. Lemma \ref{conv}. Secondly, in Lemma \ref{Qinfmass} we let $\alpha\to 0$, i.e. we establish the convergence to the unweighted critical problem. \\

We start by considering a weighted subcritical problem, see Definition \ref{wsY}, i.e. $2\leq p<p_{crit}$ and $\alpha>0$. That means we look for a solution of the Euler-Lagrange equation  \[Lv=Q^\alpha_p \rho^{\alpha p}v^{p-1} \mathrm{\ where\ }\Vert \rho^{\alpha} v\Vert_p=1.\]
Before considering this problem, we shortly give some preliminaries on the positivity of $Q^\alpha_p$:
 
\begin{lem}\label{ine}\hfill Let $2\leq p\leq p_{crit}$.
\textbf{i)}\ \ For $0\leq \alpha\leq \beta$ and $Q\geq 0$, we have $Q^\alpha_p\leq Q^\beta_p$ and $\lim_{\alpha\to 0} Q^\alpha_p=Q_p$.\\
\textbf{ii)}\ $Q^\alpha_p\geq \limsup_{s\to p}Q^\alpha_s$ for all $\alpha>0$.
\end{lem}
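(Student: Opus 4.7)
The plan is to prove both statements by the same elementary rescaling mechanism: start with a near-optimal test function for one weighted functional, rescale it so that it becomes admissible for the other, and control the rescaling factor either by the pointwise monotonicity of $\alpha\mapsto\rho^\alpha$ (since $0<\rho\leq 1$) or by the continuity of $s\mapsto\Vert f\Vert_s$ on a compact support. The hypothesis $Q\geq 0$ in part i) enters only to force $\int vLv\,\vo_g\geq 0$ on test functions, which is needed to preserve the direction of an inequality after division by a constant $\geq 1$.

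For part i), since $\rho\leq 1$ and $\alpha\leq\beta$, we have $\rho^\alpha\geq\rho^\beta$ pointwise, hence $\Vert\rho^\alpha w\Vert_p\geq\Vert\rho^\beta w\Vert_p$ for every $w\in C_c^\infty(M)$. Given $w$ admissible for $Q^\beta_p$, the constant $c:=\Vert\rho^\alpha w\Vert_p$ satisfies $c\geq 1$, and $w/c$ is admissible for $Q^\alpha_p$, so
\[ Q^\alpha_p\leq c^{-2}\int wLw\,\vo_g\leq\int wLw\,\vo_g, \]
where the last step uses $Q\geq 0$. Infimizing over $w$ yields $Q^\alpha_p\leq Q^\beta_p$. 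The limit $\lim_{\alpha\to 0}Q^\alpha_p=Q_p$ then follows from the monotonicity (giving the trivial bound $Q^\alpha_p\geq Q^0_p=Q_p$) together with the reverse inequality: fix $v\in C_c^\infty$ with $\Vert v\Vert_p=1$; dominated convergence on the compact support of $v$, with dominator $|v|\in L^p$, shows $\Vert\rho^\alpha v\Vert_p\to 1$, so the admissible rescaling $v/\Vert\rho^\alpha v\Vert_p$ produces $\limsup_{\alpha\to 0}Q^\alpha_p\leq\int vLv\,\vo_g$, and infimizing over such $v$ gives $\limsup Q^\alpha_p\leq Q_p$.

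Part ii) runs along the same lines, with monotonicity in $\alpha$ replaced by continuity of $\Vert\cdot\Vert_s$ in $s$: given $v$ admissible for $Q^\alpha_p$, the function $\rho^\alpha v$ is bounded and compactly supported, so dominated convergence on its support yields $c_s:=\Vert\rho^\alpha v\Vert_s\to 1$ as $s\to p$. Then $v/c_s$ is admissible for $Q^\alpha_s$, giving $Q^\alpha_s\leq c_s^{-2}\int vLv\,\vo_g$; taking $\limsup$ in $s$ and then $\inf$ over admissible $v$ gives $\limsup_{s\to p}Q^\alpha_s\leq Q^\alpha_p$. No sign hypothesis is needed here since we are only passing to a limit in the rescaling factor. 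The only subtle point in the whole argument is thus the sign step in part i); everything else reduces to bookkeeping about normalizations and dominated convergence on a fixed compact set, so no bounded-geometry input is required at this stage.
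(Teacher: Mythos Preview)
Your proof is correct and follows essentially the same approach as the paper. The paper phrases everything via the quotient form $Q^\alpha_p=\inf_v\frac{\int vLv\,\vo_g}{\Vert\rho^\alpha v\Vert_p^2}$ and, for the limit in part i), swaps the two infima ($\inf_\alpha\inf_v=\inf_v\inf_\alpha$) rather than invoking dominated convergence explicitly; your rescaling argument is the constrained-formulation version of exactly the same computation.
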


\begin{proof}\hfill\\
i)\ Since $0<\rho\leq 1$ and $\alpha\leq \beta$, $\Vert \rho^\alpha v\Vert_p\geq \Vert \rho^\beta v\Vert_p$. With $Q\geq 0$ we know $\int_M vLv\vo_g\geq 0$ for all $v\in C_c^\infty(M)$. Hence, $Q^\alpha_p\leq Q^\beta_p$ and 
\[\lim_{\alpha\searrow 0} Q^\alpha_p=\inf_{\alpha\geq 0} \inf_v \frac{\int_M vLv\,\vo_g}{\Vert \rho^\alpha v\Vert_p^2}=\inf_v \inf_{\alpha\geq 0} \frac{\int_M vLv\,\vo_g}{\Vert \rho^\alpha v\Vert_p^2}=\inf_v \frac{\int_M vLv\,\vo_g}{\Vert  v\Vert_p^2}=Q_p\]
where $\inf_v$ always goes over all $v\in C_c^\infty(M)\setminus\{0\}$. \\
ii) $\Vert v\Vert_s\to \Vert v\Vert_p$  as $s\to p$ and, thus, we have  
\[ Q^\alpha_p=\inf_v \frac {\int_M vLv\vo_g}{\Vert \rho^\alpha v\Vert_p^2}= \inf_v \lim_{s\to p}\frac {\int_M vLv\vo_g}{\Vert \rho^\alpha v\Vert_s^2} \geq \limsup_{s\to p} \inf_v \frac {\int_M vLv\vo_g}{\Vert \rho^\alpha v\Vert_s^2} = \limsup_{s\to p} Q^\alpha_s\]
where $\inf_v$ is understood as above in i).
 \end{proof}

\begin{rem}\hfill\\
\textbf{i)}\ \ On closed manifolds, if $Q_p\geq 0$, there is already equality in Lemma \ref{ine}.ii, cf. \cite[Lem. V.2.3]{SY}. But for the Euclidean space $(\R^n, g_E)$ we have $Q(\R^n)=Q(S^n)>0$ and $Q_s(\R^n)=0$ for $s\in[2,p_{crit})$, which can be seen when rescaling a radial test function $v(r)\in C_c^\infty(\R^n)$ by a constant $\lambda>0$: $\ov{v}(r)=v(\lambda r)$.\\
\textbf{ii)} On closed Riemannian manifolds, the signs of the Yamabe invariant $Q$ and the first eigenvalue $\mu$ of the conformal Laplacian always coincide. On open manifolds, this is again already false for the Euclidean space where $\mu(\R^n)=0$ but $Q(\R^n)=Q(S^n)$. 
\end{rem}


\begin{lem}\label{class} We have $\mu<0$ if and only if $Q<0$.\\
If we assume additionally that the embedding $H_1^2\hookrightarrow L^{p}$ for $2\leq p\leq p_{crit}$ is continuous, that the scalar curvature is bounded from below and that $\mu >0$, then $Q_p>0$ and $\liminf_{p\to p_{crit}} Q_p>0$.
\end{lem}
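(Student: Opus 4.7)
The plan is to handle the two claims separately. For the first equivalence $\mu<0 \iff Q<0$, the point is that both quantities are infima of the same quadratic form $v\mapsto \int_M vLv\,\vo_g$ over $C_c^\infty(M)\setminus\{0\}$, just with different normalizations ($L^2$ versus $L^{p_{crit}}$). Both normalizations are scale-invariant in the sense that any $v$ can be renormalized, so the sign of each infimum is determined purely by whether there exists some $v\in C_c^\infty(M)\setminus\{0\}$ making $\int_M vLv\,\vo_g$ negative. Thus $\mu<0$, the existence of such a negative-energy test function, and $Q<0$ are all equivalent. (One also observes that if $\int_M vLv\,\vo_g\geq 0$ for all $v$, then both $\mu\geq 0$ and $Q\geq 0$.)

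For the second part, I would first turn $\mu>0$ plus a lower bound $\s\geq -C$ on the scalar curvature into a coercivity estimate of the form $\int_M vLv\,\vo_g \geq c_1\|v\|_{H_1^2}^2$ for all $v\in C_c^\infty(M)$. Concretely, writing $\int vLv = a_n\int|\d v|^2 + \int \s v^2$, I would combine
\begin{equation*}
\int_M vLv\,\vo_g \geq a_n\int_M|\d v|^2\,\vo_g - C\int_M v^2\,\vo_g
\end{equation*}
with the spectral estimate $\int_M vLv\,\vo_g \geq \mu \int_M v^2\,\vo_g$ in a convex combination $\lambda\in(0,\mu/(\mu+C))$ to pick up a positive multiple of both $\int|\d v|^2$ and $\int v^2$. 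This yields the desired $c_1>0$.

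Given the continuous embedding $H_1^2\hookrightarrow L^p$ with constant $K_p$, I then obtain $\|v\|_p^2\leq K_p^2\|v\|_{H_1^2}^2 \leq (K_p^2/c_1)\int_M vLv\,\vo_g$, whence $Q_p\geq c_1/K_p^2>0$. The main obstacle is the uniformity in $p$ needed for $\liminf_{p\to p_{crit}}Q_p>0$, because a priori $K_p$ could blow up as $p\nearrow p_{crit}$. I would resolve this by interpolation: writing $1/p=(1-\theta)/2+\theta/p_{crit}$ with $\theta\in[0,1]$, H\"older gives $\|v\|_p\leq \|v\|_2^{1-\theta}\|v\|_{p_{crit}}^\theta$, and by Young's inequality
\begin{equation*}
\|v\|_p^2 \leq (1-\theta)\|v\|_2^2+\theta\|v\|_{p_{crit}}^2 \leq (K_2^2+K_{p_{crit}}^2)\|v\|_{H_1^2}^2.
\end{equation*}
Since the hypothesis gives continuous embeddings at both endpoints $p=2$ and $p=p_{crit}$, the resulting constant $K:=K_2^2+K_{p_{crit}}^2$ is independent of $p\in[2,p_{crit}]$, so $Q_p\geq c_1/K$ uniformly, which in particular gives $\liminf_{p\to p_{crit}}Q_p\geq c_1/K>0$.
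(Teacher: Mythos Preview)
Your argument is correct and uses the same ingredients as the paper---the spectral bound $\int vLv\geq\mu\|v\|_2^2$, the lower bound on $\s$, the continuous embedding $H_1^2\hookrightarrow L^p$, and the interpolation $\|v\|_p\leq\|v\|_2^{1-\theta}\|v\|_{p_{crit}}^\theta$ for uniformity in $p$---but packages them differently. The paper argues by contradiction: assuming $Q_p=0$ (resp.\ $\liminf_{p\to p_{crit}}Q_p=0$), it takes a minimizing sequence $v_i$ with $\|v_i\|_p=1$ and $\int v_iLv_i\to 0$, deduces successively $\|v_i\|_2\to 0$ and $\|\d v_i\|_2\to 0$, and then contradicts $1=\|v_i\|_p\leq C\|v_i\|_{H_1^2}$. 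You instead combine the two lower bounds for $\int vLv$ into a single a~priori coercivity estimate $\int vLv\geq c_1\|v\|_{H_1^2}^2$ and read off $Q_p\geq c_1/K_p^2$ directly. Your route is slightly more quantitative (it gives an explicit lower bound for $Q_p$), while the paper's sequential argument makes the role of each hypothesis visible step by step; substantively they are the same proof.
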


\begin{proof}
If $\mu<0$, there exists a function $v\in C_c^\infty(M)$ with $\int_M vLv\,\vo_g<0$. Thus, $Q_p<0$ for all $p$ (in particular $Q=Q_{p_{crit}}<0$). The converse is obtained analogously.\\
This implies that $\mu\geq 0$ if and only if $Q_p\geq 0$ for all $p$. Now let there be a continuous Sobolev embedding, let $\s$ be bounded from below and let $Q_p=0$: We show by contradiction that $\mu=0$, i.e. we argue against the assumption $\mu >0$.
Let $v_i\in C_c^\infty(M)$ be a minimizing sequence: $\Vert v_i\Vert_p=1$ with $ \int v_iLv_i\vo_g \searrow 0$. Then, since $\mu >0$, $\Vert v_i\Vert_2\to 0$. Hence, with the lower bound for the scalar curvature and
\begin{align*}
0\leftarrow \int_Mv_iLv_i\vo_g&=a_n\Vert \d v_i\Vert_2^2+\int_M\s v_i^2\vo_g\\
& \geq a_n\Vert \d v_i\Vert_2^2+\inf_M \s \Vert v_i\Vert_2^2
\end{align*}
  we get $\Vert \d v_i\Vert_2\to 0$. Thus, $v_i\to 0$ in $H_1^2$, but the continuous Sobolev embedding gives $1=\Vert v_i\Vert_p\leq C \Vert v_i\Vert_{H_1^2}$ which is a contradiction.\\
Analogously, we proceed to prove $\liminf_{p\to p_{crit}} Q_p>0$ by contradiction: Let there be a minimizing sequence $v_p\in C_c^\infty(M)$ for $\liminf_{p\to p_{crit}} Q_p=0$, i.e. $\Vert v_p\Vert_p=1$ and $\int_M v_pLv_p\vo_g\to 0$ for $p\to p_{crit}$. This implies, exactly as before, that $\Vert v_p\Vert_{H_1^2}\to 0$. But from the Sobolev embeddings, see Theorem \ref{embcon}, we get 
\[1=\Vert v_p\Vert_p\leq C(p)\Vert v_p\Vert_{H_1^2}\leq \max_{p\in [2, p_{crit}]}C(p) \Vert v_p\Vert_{H_1^2}.\] 
Since each $p\in [2, p_{crit}]$ can be written as $\frac{1}{p}=\frac{1-\theta}{2}+\frac{\theta}{p_{crit}}$ with $0\leq \theta\leq 1$, we then get by interpolation that for all $u\in H_1^2$
\begin{align*}
\Vert u\Vert_p\leq  \Vert u\Vert_2^{1-\theta}\Vert u\Vert_{p_{crit}}^\theta\leq C(2)^{1-\theta}C(p_{crit})^\theta\Vert u\Vert_{H_1^2}.
\end{align*}
Thus, $C(p)\leq C(2)^{1-\theta}C(p_{crit})^\theta$ which implies that $\max_{p\in [2, p_{crit}]} C(p)$ is finite. This provides a contradiction to $\liminf_{p\to p_{crit}} Q_p=0$ (the same interpolation argument applied to $p\in [p-\epsilon, p+\epsilon]$ even shows that $C(p)$ is continuous in $p$). 
\end{proof}

\begin{rem}For closed manifolds and $Q\geq 0$, it holds $ Q(M,g)=\inf_{\ov{g}\in [g]} \mu(\ov{g})\vol(\ov{g})^\frac{2}{n}$ where $[g]$ denotes the conformal class of $g$ and $\vol(\ov{g})$ is the volume of $(M,\ov{g})$. For complete manifolds and $Q\geq 0$, we have analogously that
\[ Q(M,g)=\inf_{\ov{g}\in [g],\ \vol(\ov{g})<\infty} \mu(\ov{g})\vol(\ov{g})^\frac{2}{n}.\]\\
For manifolds of finite volume, this implies that from $\mu=\mu (g)=0$ we obtain $Q=0$. 
\end{rem}


Now, we come to solutions of the weighted subcritical problem.

\begin{lem}\label{weisub}
Assume that the embedding $H_1^2\hookrightarrow \rho^\alpha L^p$ is compact for all $\alpha >0$ and $2\leq p<p_{crit}= \frac{2n}{n-2}$. Furthermore, let $\tilde{c}\geq\s\geq c$ for constants $\tilde{c}$ and $c$. Let $\mu >0$.\\
Then, for any $\alpha>0$ and $2\leq p<p_{crit}$, there exists a positive function $v\in C^\infty \cap H_1^2$ with $Lv=Q^\alpha_p \rho^{\alpha p} v^{p-1}$ and $\Vert \rho^\alpha v\Vert_p=1$.
\end{lem}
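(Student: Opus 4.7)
The plan is to apply the direct method of the calculus of variations: minimize $F(v):=\int_M vLv\,\vo_g$ over $H_1^2$ subject to $\Vert \rho^\alpha v\Vert_p=1$, extract a weak limit using the compact weighted embedding to preserve the constraint in the limit, and then read off the Euler--Lagrange equation by the Lagrange multiplier rule.

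\textbf{Step 1 (equivalent norm).} Under the standing hypotheses $\mu>0$ and $c\le\s\le\tilde c$, I would first check that $F$ is equivalent to $\Vert\cdot\Vert_{H_1^2}^2$ on $H_1^2$. An upper bound $F(v)\le C\Vert v\Vert_{H_1^2}^2$ follows from $F(v)=a_n\Vert\d v\Vert_2^2+\int_M \s v^2\,\vo_g$ together with $|\s|\le\max(|c|,|\tilde c|)$. For the lower bound, $\mu>0$ gives $\Vert v\Vert_2^2\le F(v)/\mu$, and then $a_n\Vert\d v\Vert_2^2=F(v)-\int_M\s v^2\,\vo_g\le F(v)+\max(|c|,|\tilde c|)\Vert v\Vert_2^2$ controls $\Vert\d v\Vert_2$ in terms of $F(v)$. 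In particular $\sqrt F$ is a Hilbert norm equivalent to $\Vert\cdot\Vert_{H_1^2}$.

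\textbf{Step 2 (existence of a minimizer).} Take $v_i\in C_c^\infty(M)$ with $\Vert\rho^\alpha v_i\Vert_p=1$ and $F(v_i)\to Q^\alpha_p$. By Step 1, $\Vert v_i\Vert_{H_1^2}$ is bounded, so after passing to a subsequence $v_i\rightharpoonup v$ weakly in $H_1^2$. The hypothesized compactness of $H_1^2\hookrightarrow \rho^\alpha L^p$ yields $\rho^\alpha v_i\to\rho^\alpha v$ strongly in $L^p$, so $\Vert\rho^\alpha v\Vert_p=1$ and in particular $v\not\equiv 0$. Since $F$ is a nonnegative continuous quadratic form on $H_1^2$ (by Step 1), it is convex and continuous, hence weakly lower semicontinuous: $F(v)\le\liminf F(v_i)=Q^\alpha_p$. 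By density of $C_c^\infty(M)$ in $H_1^2$ the defining infimum of $Q^\alpha_p$ extends to $H_1^2$, so also $F(v)\ge Q^\alpha_p\Vert\rho^\alpha v\Vert_p^2=Q^\alpha_p$; equality holds and $v$ is a minimizer.

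\textbf{Step 3 (Euler--Lagrange equation, regularity, positivity).} Since $F(|v|)=F(v)$ and the constraint is invariant under $v\mapsto|v|$, I may take $v\ge 0$. A standard Lagrange multiplier computation, varying $v+t\phi$ with $\phi\in C_c^\infty(M)$ and testing against $v$ itself to identify the multiplier, gives $Lv=Q^\alpha_p\rho^{\alpha p}v^{p-1}$ weakly. Because $p<p_{crit}$ the nonlinearity $v^{p-1}$ is subcritical, so on a bounded-geometry manifold a standard elliptic bootstrap promotes $v$ from $H_1^2$ to $C^\infty$. Finally, shifting $L$ by a sufficiently large constant to obtain a zeroth order term of definite sign, the strong maximum principle upgrades $v\ge 0$, $v\not\equiv 0$ to $v>0$ everywhere.

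\textbf{Main obstacle.} The delicate point is weak lower semicontinuity of $F$ along sequences in $H_1^2$: on a noncompact manifold the term $\int_M\s v^2\,\vo_g$ is not weakly continuous in $H_1^2$, since $\s$ need not decay at infinity and weak $H_1^2$ convergence does not imply strong $L^2$ convergence. Step 1 is designed precisely to sidestep this by identifying $F$ with the square of an equivalent Hilbert norm, for which weak lower semicontinuity is automatic. This is where the hypotheses $\mu>0$ and the two-sided bound $c\le\s\le\tilde c$ enter essentially; the compact weighted embedding hypothesis, in turn, is what preserves the constraint $\Vert\rho^\alpha v\Vert_p=1$ in the weak limit and forces the minimizer to be nonzero.
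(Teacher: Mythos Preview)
Your argument is correct and follows the same overall strategy as the paper (direct method: bounded minimizing sequence, weak limit in $H_1^2$, compact weighted embedding to retain the constraint, Euler--Lagrange plus regularity and maximum principle). The one substantive difference is in how weak lower semicontinuity of $F(v)=\int_M vLv\,\vo_g$ is obtained. The paper splits $F$ into $a_n\Vert\d v\Vert_2^2$ and $\int_M\s v^2\,\vo_g$, uses the standard fact $\Vert\d v\Vert_2\le\liminf\Vert\d v_i\Vert_2$, and then handles the scalar-curvature term by an auxiliary argument: it invokes compactness of $H_1^2\hookrightarrow\rho^\beta L^2$ for each $\beta>0$ to get $\int_M\s\rho^{2\beta}v^2\le\int_M\s\rho^{2\beta}v_i^2+\epsilon$, and then lets $\beta\to 0$ via dominated convergence. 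Your Step~1 replaces this with the observation that, under $\mu>0$ and two-sided boundedness of $\s$, $F$ is the square of a Hilbert norm equivalent to $\Vert\cdot\Vert_{H_1^2}$, so weak lower semicontinuity of $F$ is automatic. This is cleaner and, incidentally, does not use the compact embedding at the exponent $p=2$; the paper's route, on the other hand, makes the role of the weighted $L^2$ compactness more visible. One minor remark: in Step~3 you invoke bounded geometry for the bootstrap, but only local elliptic regularity is needed (as in the paper), since the lemma itself does not assume bounded geometry.
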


\begin{proof} Firstly, from Lemma \ref{class} we know that $Q> 0$ and, thus, by Lemma \ref{ine}.i $Q_p^\alpha> 0$ for all $\alpha>0$. Let now $\alpha>0$ and $2\leq p<p_{crit}$ be fixed. Moreover, let $v_i\in C_c^\infty(M)$ be a minimizing sequence for $Q_p^\alpha$, i.e. $\int_M v_i Lv_i\,\vo_g\searrow Q^\alpha_p$ and $\Vert \rho^\alpha v_i\Vert_p=1$. Without loss of generality, we can assume that $v_i$ is nonnegative. Moreover, with 
\[ 0 \leq Q_p^\alpha \swarrow \int_M v_iLv_i\vo_g=a_n\Vert \d v_i\Vert_2^2+\int_M \s\ v_i^2\,\vo_g \geq \mu \Vert v_i\Vert_2^2\]
for $i\to \infty$ and $\mu>0$, we obtain that $\Vert v_i\Vert_2$ is uniformly bounded. Hence, using that $\int_M v_iLv_i\vo_g  \geq a_n\Vert dv_i\Vert_2^2 + c\Vert v_i\Vert_2^2$ the sequence $v_i$ is uniformly bounded in $H_1^2$. So, $v_i\to v\geq 0 $ weakly in $H_1^2$ with $\Vert \d v\Vert_2\leq \liminf \Vert \d v_i\Vert_2$. Due to the compactness of the Sobolev embeddings in Theorem \ref{embcon}, $\rho^\beta v_i$ converges to $\rho^\beta v$ even strongly both in $L^p$ and in $L^2$ for all $\beta>0$. In particular, for $\beta =\alpha $ we obtain $\Vert \rho^\alpha v\Vert_p=1$.

Moreover, for any $w\in L^2, w\geq 0$ we have $\rho^\beta w\nearrow w$ pointwise as $\beta\to 0$ and, thus, with the theorem of dominated convergence $\Vert (\rho^\beta-1)w\Vert_2\to 0$ as $\beta \to 0$. Since the scalar curvature is bounded, we further get  $\int_M \s \rho^{2\beta} w^2\, \vo_g \to 
\int_M \s w^2\, \vo_g$ as $\beta\to 0$. Hence, for every $\epsilon >0$ we have for $i$ large enough that
\[ \int_M \s v^2\vo_g \xleftarrow{\beta\to 0} \int_M \s \rho^{2\beta} v^2\leq  \int_M \s \rho^{2\beta}v_i^2\, \vo_g +\epsilon \xrightarrow{\beta\to 0} \int_M \s v_i^2 \vo_g +\epsilon .\]
Thus, 
\begin{align*} \int_M vLv\, \vo_g &= a_n \Vert \d v\Vert_2^2+\int_M \s v^2\,\vo_g \leq a_n \liminf_{i\to\infty} \Vert \d v_i\Vert_2^2+ \liminf_{i\to\infty} \int_M \s  v_i^2\, \vo_g\\
&\leq \lim_{i\to \infty} \int_M v_i Lv_i\, \vo_g=Q^\alpha_p.
\end{align*}
Hence, $\Vert \rho^\alpha v\Vert_p^{-2}\int_M vLv\vo_g\leq Q^\alpha_p$. But since $Q^\alpha_p$ is the minimum, it already holds equality and $v$ fulfills the Euler-Lagrange equation $Lv=Q^\alpha_p \rho^{\alpha p}v^{p-1}$ with $ \Vert \rho^\alpha v\Vert_p=1$.

Furthermore, since $Q^\alpha_p>0$ and $\s$ is bounded, there is a constant $C > 0$ with $\Delta v + Cv\geq 0$. Thus, due to the maximum principle, $v$ is everywhere positive. From local elliptic regularity theory, we know that $v$ is smooth.
\end{proof}


Before considering the convergence of solutions, we observe that 

\begin{rem}\label{alpha0}\hfill\\
\textbf{i)} From Lemma \ref{ine}.i it follows:\\
Let $Q(\R^n, g_E)>Q(M)$. Then, there exists an $\alpha_0>0$ such that for all $0\leq\alpha\leq\alpha_0$  we have $Q(\R^n)>Q^\alpha_{p_{crit}}(M).$ \\
\textbf{ii)} In the subsequent, we will often make use of the following without any further reference: \\
If $v\in L^2$ is a weak smooth solution of $Lv=c\rho^{\alpha p} v^{p-1}$ with $0< \Vert \rho^\alpha v\Vert_p\leq 1$ and bounded scalar curvature. Then, $v\in H_1^2$ and, hence, it is an admissible test function for $Q_p^\alpha$, i.e. $Q_p^\alpha(M,g)\leq \Vert \rho^\alpha v\Vert_p^{-2}\int_M vL_gv\vo_g$.
This can be seen immediately since both integrals $\int_M \s v^2\vo_g$ and $\int_M vLv\vo_g = \int_M \rho^{\alpha p}v^p\vo_g$  exist and are finite which implies the same for $\int_M v\Delta v\vo_g$. By a cut-off function argument and $v\in L^2$, one sees that $\int_M v\Delta v\vo_g=\int_M|\d v|^2\vo_g$. Thus, $v\in H_1^2$.
\end{rem}

Next, we show that a suitable subsequence of the weighted subcritical solutions given in Lemma \ref{weisub} converges to a solution of the weighted critical problem, i.e. we fix the weight $\alpha$ and let the exponent converge, i.e. $p\to p_{crit}$:

\begin{lem}\label{conv}
Let $v_{\alpha,p}\in H_1^2$ ($\alpha >0$, $p<p_{crit}$) be smooth positive solutions of  $Lv_{\alpha, p}=Q^\alpha_p\rho^{\alpha p} v_{\alpha,p}^{p-1}$ with $\Vert \rho^\alpha v_{\alpha, p}\Vert_p=1$. We assume that $Q(\R^n, g_E)>Q(M)$. Furthermore, let $M$ have bounded geometry and $\mu>0$. Let $\alpha<\alpha_0$ be fixed where $\alpha_0$ is chosen as in Remark \ref{alpha0}.\\
Then, abbreviating $v_p=v_{\alpha, p}$ 
\begin{itemize} 
  \item[a)] there exists $k>0$ such that $\sup v_p\leq k$ for all $p$
  \item[b)] for $p\to p_{crit}$, $v_p\to v_\alpha\geq 0$ in the $C^2$-topology on each compact set, $v_\alpha \in H_1^2\cap L^\infty$ and \[ Lv_\alpha = Q^\alpha_{p_{crit}}\rho^{\alpha p_{crit}} v_\alpha^{p_{crit}-1}\ \mathrm{with}\ \Vert \rho^\alpha v_\alpha\Vert_{p_{crit}}=1.\]
 \end{itemize}
\end{lem}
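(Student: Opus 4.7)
My plan has two pieces: a blow-up contradiction to obtain the uniform sup-bound claimed in (a), followed by a standard compactness-plus-regularity passage together with a norm identity to settle (b).

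For (a), I would argue by contradiction: assume $m_p:=\|v_p\|_\infty\to\infty$ along a subsequence with $p\to p_{crit}$, and pick $x_p\in M$ with $v_p(x_p)\to m_p$. Set $\lambda_p:=m_p^{-(p-2)/2}\to 0$ and, working in normal coordinates on balls in $T_{x_p}M\cong\R^n$, introduce
\[
\tilde g_p:=\lambda_p^{-2}\exp_{x_p}^{*}g, \qquad w_p(y):=v_p(\exp_{x_p}(\lambda_p y))/m_p.
\]
Bounded geometry gives $\tilde g_p\to g_E$ in $C^k_{loc}$, and the rescaled PDE reads
\[
a_n\Delta_{\tilde g_p}w_p + \lambda_p^2\,\s\,w_p = Q_p^\alpha\,\rho_p^{\alpha p}\,w_p^{p-1}, \qquad 0<w_p\le w_p(0)=1,
\]
with $\rho_p(y):=\rho(\exp_{x_p}(\lambda_p y))$ and $\lambda_p^2\s\to 0$ uniformly. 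Since the right-hand side is uniformly bounded, Schauder produces uniform $C^{2,\beta}_{loc}$ estimates, and along a further subsequence $w_p\to w$ in $C^2_{loc}(\R^n)$ with $w(0)=1$, $w\le 1$, solving $a_n\Delta_{g_E}w=Q_{p_{crit}}^\alpha(M)\,\rho_\ast^{\alpha p_{crit}}\,w^{p_{crit}-1}$ for $\rho_\ast:=\lim\rho(x_p)\in[0,1]$.

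In the principal case $\rho_\ast>0$, the change-of-variables identity $\int_{B_R}(\rho_p^\alpha w_p)^p\,\vo_{\tilde g_p}\le m_p^{(p(n-2)-2n)/2}$, whose limsup as $p\to p_{crit}$ is at most $1$ since the exponent is non-positive, combined with dominated convergence, yields $\|\rho_\ast^\alpha w\|_{p_{crit}}\le 1$. Multiplying the limit equation by $w$ and integrating then gives
\[
Q_{g_E}(w)=Q_{p_{crit}}^\alpha(M)\,\rho_\ast^{\alpha p_{crit}}\,\|w\|_{p_{crit}}^{p_{crit}-2}\le Q_{p_{crit}}^\alpha(M)\,\rho_\ast^{2\alpha}\le Q_{p_{crit}}^\alpha(M),
\]
contradicting $Q_{g_E}(w)\ge Q(\R^n)>Q_{p_{crit}}^\alpha(M)$ from Theorem~\ref{subset} and the choice $\alpha<\alpha_0$ (Remark~\ref{alpha0}.i). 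The degenerate case $\rho_\ast=0$, where the blow-up point escapes to infinity and the limit equation loses its nonlinear term, is the main technical obstacle; I would exclude it using the exponential decay of $\rho$, bounded-geometry volume growth, and the uniform $H_1^2$-bound that $v_p$ inherits (by the same computation as in the proof of Lemma~\ref{weisub}) from $\mu>0$ and the scalar-curvature lower bound.

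For (b), the $L^\infty$-bound from (a), the uniform $H_1^2$-bound, and the resulting uniformly bounded right-hand side let Schauder deliver uniform $C^{2,\beta}$-estimates on every compact set; a diagonal extraction produces $v_p\to v_\alpha$ in $C^2_{loc}$ with $v_\alpha\in L^\infty\cap H_1^2$, $v_\alpha\ge 0$. Passing to the limit in the PDE (using $Q_p^\alpha\to Q_{p_{crit}}^\alpha$, which follows from Lemma~\ref{ine}.ii together with the matching upper bound obtained by testing $Q_p^\alpha$ and $Q_{p_{crit}}^\alpha$ against any fixed $v\in C_c^\infty$) gives $Lv_\alpha=Q_{p_{crit}}^\alpha\,\rho^{\alpha p_{crit}}\,v_\alpha^{p_{crit}-1}$. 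For the norm identity, the upper bound is pure interpolation: since $\rho\le 1$ and $v_p\le k$,
\[
\int_M(\rho^\alpha v_p)^{p_{crit}}\,\vo_g\le k^{p_{crit}-p}\int_M(\rho^\alpha v_p)^p\,\vo_g=k^{p_{crit}-p}\to 1,
\]
and Fatou yields $\|\rho^\alpha v_\alpha\|_{p_{crit}}\le 1$. The lower bound requires $v_\alpha\not\equiv 0$: fixing $p_0\in[2,p_{crit})$, the inequality $1=\|\rho^\alpha v_p\|_p^p\le k^{p-p_0}\|\rho^\alpha v_p\|_{p_0}^{p_0}$ gives a uniform positive lower bound for $\|\rho^\alpha v_p\|_{p_0}$, which by the assumed compact embedding $H_1^2\hookrightarrow\rho^\alpha L^{p_0}$ transfers to $\|\rho^\alpha v_\alpha\|_{p_0}>0$. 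Remark~\ref{alpha0}.ii then makes $v_\alpha$ admissible for $Q_{p_{crit}}^\alpha$, and combining its Euler--Lagrange identity $\int v_\alpha Lv_\alpha\,\vo_g=Q_{p_{crit}}^\alpha\,\|\rho^\alpha v_\alpha\|_{p_{crit}}^{p_{crit}}$ with the variational inequality $\int v_\alpha Lv_\alpha\,\vo_g\ge Q_{p_{crit}}^\alpha\,\|\rho^\alpha v_\alpha\|_{p_{crit}}^2$ forces $\|\rho^\alpha v_\alpha\|_{p_{crit}}\ge 1$, hence equality.
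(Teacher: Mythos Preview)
Your strategy matches the paper's closely: blow-up contradiction for (a), then Schauder compactness plus a variational identity for (b). Two issues deserve comment.

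In (a), the case $\rho_\ast=0$ (blow-up points escaping to infinity) is left as ``the main technical obstacle'' with only a vague gesture at exponential weight decay and volume growth. Those are not the right ingredients. The paper's resolution is short and decisive: one first shows $u\in L^{p_{crit}}(\R^n)$ by estimating $\int_{|x|<\epsilon\delta_p^{-1}} u_p^{p}\,b_p\,\vo_{g_E}$ via the Sobolev embedding and the uniform $H_1^2$-bound on $v_p$ (which you correctly note comes from $\mu>0$ and the scalar curvature bounds); then, since the limit equation collapses to $a_n\Delta u=0$ with $0\le u\le 1$ and $u(0)=1$, the maximum principle forces $u\equiv 1$, contradicting $u\in L^{p_{crit}}(\R^n)$. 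You should also note, in the case $\rho_\ast>0$, that the admissibility of the limit $w$ as a test function for $Q(\R^n)$ is not automatic; the paper handles this via removable-singularity on $S^n$ through stereographic projection.

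Your claim that $Q_p^\alpha\to Q_{p_{crit}}^\alpha$ is not justified by the argument you give: testing both invariants against a fixed $v\in C_c^\infty$ only reproduces the direction $\limsup_{p\to p_{crit}}Q_p^\alpha\le Q_{p_{crit}}^\alpha$ already in Lemma~\ref{ine}.ii, not the reverse. The paper does not assume this convergence; it works with the inequality $Lv_\alpha\le Q_{p_{crit}}^\alpha\,\rho^{\alpha p_{crit}}v_\alpha^{p_{crit}-1}$, and equality (together with $\limsup_{p} Q_p^\alpha=Q_{p_{crit}}^\alpha$) falls out only \emph{after} the variational argument forces $\|\rho^\alpha v_\alpha\|_{p_{crit}}=1$. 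Your proof is easily repaired the same way.

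On the positive side, your argument that $v_\alpha\not\equiv 0$ in (b) --- via the pointwise bound $(\rho^\alpha v_p)^p\le k^{p-p_0}(\rho^\alpha v_p)^{p_0}$ and the compact embedding $H_1^2\hookrightarrow\rho^\alpha L^{p_0}$ for a fixed $p_0<p_{crit}$ --- is a genuine alternative to the paper's, and arguably cleaner. The paper instead bounds the \emph{unweighted} $\|v_p\|_p$ via $Q_p\le Q_p^\alpha\|v_p\|_p^{-2}$ together with $\liminf_p Q_p>0$ from Lemma~\ref{class}, then uses the explicit decay $\rho\le e^{-(R-\xi)}$ on $M\setminus B_R$ to trap at least half of the weighted mass in a fixed ball. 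Your route avoids both the detour through unweighted $Q_p$ and the explicit weight estimate.
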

 
\begin{proof} From Lemma \ref{max} in the Appendix we know that each $v_p$ has a maximum.\\
a) Let $x_p\in M$ be a point where $v_p$ attains its maximum. We prove the claim by contradiction and assume that $m_p:=v_p(x_p)\to \infty$.\\
If, for $p\to p_{crit}$, the sequence $x_p$ converges to a point $x\in M$, we could simply use Schoen's argument \cite[pp. 204-206]{SY} and introduce geodesic normal coordinates around $x$ to show that $m_p$ is bounded from above by a constant independent of $p$.\\
In general, the sequence $x_p$ can escape to infinity, that is why we take a geodesic normal coordinate system around each $x_p$ with radius $\epsilon<\text{inj} (M)=$ the injectivity radius of $M$. This coordinate system will be denoted by $\phi_p$ and $\phi_p: B_\epsilon(0)\subset \R^n\to M$ with $\phi_p(0)=x_p$. The bounded geometry of $M$ and the boundedness  of each $v_p$  ensures that Schoen's argument can be adapted:\\
With respect to the geodesics coordinates introduced above, we have the following expansions \cite[pp. 60-61]{LP} 
\begin{align*} g_{rq}^p(x)&=\delta_{rq}+\frac{1}{3} R^p_{rijq}x^ix^j+ O(|x|^3)\\
 \det g_{rq}^p(x)&=1-\frac{1}{3} R^p_{ij}x^ix^j+O(|x|^3), 
 \end{align*}
where the upper index $p$ always refers to the coordinate system $\phi_p$ around $x_p$, $R^p_{rijq}$ denotes the Riemannian curvature in $x_p$ and $R^p_{ij}$ the Ricci curvature in $x_p$. 
After rescaling $u_p ={m_p}^{-1} v_p (\phi_p(\delta_p x))$ with $\delta_p =m_p^{(2-p)/2}\to 0$ (note that $\delta_p\to 0$ as $p\to p_{crit}$) we have $u_p: B_{\frac{\epsilon}{\delta_p}}(0)\to M$ with $u_p(0)=1$, $u_p\leq 1$. The weight function in the new coordinates will be denoted by ${\rho_p}(x):=\rho(\phi_p(\delta_p x))$.\\
In the following, we identify $\phi_p(\delta_p x)$ with $\delta_p x$  and omit $\phi_p$ in the notation. \\
The Euler-Lagrange equation in the geodesic coordinates reads (compare \cite{SY})
\bq \frac{1}{b_p}\partial_j(b_p a_p^{ij}\partial_i u_p)- c_p u_p +Q^{\alpha}_p \rho_p^{\alpha p} u_p^{p-1}=0\label{EL-Rn}\eq
where
\begin{align}\label{coeff}
  a_p^{ij}(x)&=a_n g^{ij}(\delta_p x)\to a_n\notag\\
  b_p (x)&=\sqrt{\det\, g(\delta_p x)}\to 1\\
  c_p (x)&=m_p^{1-p} \s (\delta_p x)\to 0\notag
\end{align}
for $p\to p_{crit}$. The convergences in \eqref{coeff} are $C^1$ on any compact subset of $\R^n$.\\
Now, we can follow the proof of Schoen, and we show with interior Schauder and global $L^p$ estimates that $u_p$ is bounded in $C^{2,\gamma}$ (for appropriate $\gamma$) on each compact subset $K$ and, thus,  obtain $u_p\to u$ in $C^2$ on $K$: We have on a compact subset $K\subset \Omega \subset \R^n$ the inner 
$L^p$ estimate (using $\rho_p\leq 1$ and $u_p\leq 1$):  \[\Vert u_p\Vert_{H_2^p(K)}\leq C_{K}(\Vert u_p\Vert_{L^q (\Omega)} +\Vert u_p^{p-1}\Vert_{L^q(\Omega)})\leq 2C_K \vol(\Omega)^{\frac{1}{q}}\leq C(K,\Omega),\] 
where $q$ and $p$ are conjugate and $C(K,\Omega)$ only depends on the subsets $K,\Omega$ and $(M,g)$.\\  
Together with the continuous embedding $H_1^q\hookrightarrow C^{0,\gamma}$ where $\gamma\leq 1-\frac{n}{q}$, we obtain, that $u_p$ and, thus, also $u_p^{p-1}$, are uniformly bounded in $C^{0,\gamma}(K)$ (for possibly smaller $\gamma$). With the interior Schauder estimate 
\[\Vert u_p\Vert_{C^{2,\gamma}(K)}\leq C (\Vert u_p\Vert_{C^0(\Omega)}+\Vert u_p^{p-1}\Vert_{C^{0,\gamma}(\Omega)})\]
$u_p$ is uniformly bounded in $C^{2,\gamma}(K)$. With the theorem of Arcela-Ascoli, we obtain, by going to a subsequence if necessary, that $u_p\to u$ in $C^2$ on each compact subset. Thus, $1\geq u\geq 0$ and $u(0)=1$.\\
Firstly, we argue that $u\in L^{p_{crit}}(\R^n)$: We estimate
\begin{align*}\int_{|x|<\epsilon \delta^{-1}_p}u_p^{p} b_p\,\vo_{g_E}&= \int_{B_\epsilon(x_p)} \delta_p^{\frac{2p}{p-2}-n} v_p^p\,\vo_{g} \leq C\delta_p^{\frac{2p}{p-2}-n} \Vert v_p\Vert _{H_1^2(M)}^p
\end{align*}
where the equality is obtained by change of variables with $b_p$ as in \eqref{coeff} and the inequality is the Sobolev embedding (see Theorem \ref{embcon}). Using $Lv_p=Q^\alpha_p\rho^{\alpha p} v_p^{p-1}$ with $\Vert\rho^\alpha v_p\Vert_p=1$, we obtain 
\begin{align*}Q^\alpha_p&=\int_M v_p Lv_p\vo_g=a_n\Vert\d v_p\Vert^2_{L^2(M)}+\int_M \s v_p^2\vo_g\\
&\geq a_n\Vert\d v_p\Vert_{L^2(M)}^2+\inf \s \Vert v_p\Vert^2_{L^2(M)}
\end{align*}
and, thus,
\begin{align}\label{h1_a}
\int_{|x|<\epsilon \delta^{-1}_p}u_p^{p} b_p\,\vo_{g_E}
&\leq C\delta_p^{\frac{2p}{p-2}-n} \left(\Vert v_p\Vert_{L^2(M)} + \left( a_n^{-1}\left(Q_p^\alpha-\inf \s \Vert v_p\Vert_{L^2(M)}^2\right)\right)^{\frac{1}{2}}\right)^p.
 \end{align}
From $\mu >0$, we have additionally that 
\begin{align}\label{est_L2}\Vert v_p\Vert_{L^2}^2\leq \mu^{-1}\int v_pLv_p\vo_g=\mu^{-1}Q^\alpha_p.\end{align} With $\limsup_{p\to p_{crit}}Q_p^\alpha\leq Q_{p_{crit}}^\alpha$ (Lemma \ref{ine}.ii), we get that $\Vert v_p\Vert_{L^2}$ is uniformly bounded on $p\in (2,p_{crit})$. Moreover, $\frac{2p}{p-2}-n\searrow 0$ for $p\to p_{crit}$. Hence, with \eqref{h1_a} the integral $\int_{|x|<\epsilon \delta^{-1}_p}u_p^{p} b_p\,\vo_{g_E}$ is bounded from above by a constant independent of $p$. Thus, by the Lemma of Fatou $u\in L^{p_{crit}}(\R^n)$.\\

Now, in order to construct a contradiction, we distinguish between two cases:

At first, we consider the case that $x_p$ escapes to infinity if $p\to p_{crit}$:

Then, $\rho_p\to 0$ as $p\to p_{crit}$. With the  $C^2$-convergence of $u_p\to u$ on compact subsets and \eqref{EL-Rn}, this implies
 \[        
a_n\Delta u=\limsup_{p\to p_{crit}} (Q^{\alpha}_p(M)\rho_p^{\alpha p} u_p^{p-1})= 0\] on $\R^n$. From the maximum principle, $u(0)=1$ and $u\leq 1$, we obtain that $u\equiv 1$ which contradicts $u\in L^{p_{crit}}(\R^n)$.\\

Secondly, we consider the remaining case that a subsequence of $x_p$ converges to a point $y\in M$. With $u\geq0$ and $u(0)=1$, we obtain $u>0$ from the maximum principle.\\
Moreover, with $\Vert \rho^\alpha v\Vert_p=1$ we have for $\epsilon_1\leq \epsilon$
\begin{align*}
\int_{|x|<{\epsilon_1}\delta^{-1}_p}u_p^{p} b_p\,\vo_{g_E}&\leq (\min_{B_{{\epsilon_1}}(x_p)} \rho^{\alpha p})^{-1}\int_{B_{{\epsilon_1}}(x_p)} \rho^{\alpha p}v_p^{p} \delta_p^{\frac{2p}{p-2}-n}\,\vo_g\\ &\leq  (\min_{B_{{\epsilon_1}}(x_p)} \rho^{\alpha p})^{-1}\delta_p^{\frac{2p}{p-2}-n} \to  \max_{B_{{\epsilon_1}}(y)} \rho^{-\alpha p_{crit}}
\end{align*}
for $p < p_{crit}$ and by Fatou's Lemma, we obtain $\Vert u\Vert_{p_{crit},g_E}\leq \max_{B_{{\epsilon_1}}(y)} \rho^{-\alpha }$. Letting $\epsilon_1\to 0$ we have  $\Vert u\Vert_{p_{crit},g_E}\leq \rho^{-\alpha }(y)$. 

From \eqref{EL-Rn}, $u_p\to u$ in $C^2$ on compact subsets and that $\rho_p$ converges to the constant $\rho (y)$, we get \begin{align*}        
a_n\Delta u&=\limsup_{p\to p_{crit}} (Q^{\alpha}_p(M)\rho_p^{\alpha p} u_p^{p-1})\\
&\leq  (\limsup_{p\to p_{crit}} Q^{\alpha}_p(M)) \rho^{\alpha p_{crit}}(y) 
u^{p_{crit}-1} \leq Q^\alpha_{p_{crit}}(M) \rho^{\alpha p_{crit}}(y) 
u^{p_{crit}-1}
\end{align*} on $\R^n$.
Note that $u$ is an admissible test function, i.e. $Q(\R^n)\leq Q_{g_E}(u)$, which can be seen by the following: From $0\leq u\leq 1$, $L_{\R^n}u=cu^{p_{crit}-1}$ and $u\in L^{p_{crit}}(\R^n)$ we get by Lemma \ref{max} that $\lim_{|x|\to \infty } u=0$. By stereographic projection we can pullback everything from $\R^n$ to $S^n\setminus \{z\}$ for a fixed $z\in M$. The pullback of $u$ we call $\hat{u}$. Then, $L_{S^n}\hat{u}=c\hat{u}^{p_{crit}-1}$ on $S^n\setminus\{z\}$ and $ u\in L^{p_{crit}}(S^n\setminus\{z\})$. Using a cut-off argument near $z$, one can remove the singularity and gets $L_{S^n}\hat{u}=c\hat{u}^{p_{crit}-1}$ on $S^n$ which implies by global regularity theory that $\hat{u}\in H_1^2(S^n)$. Hence, by conformal invariance $u$ is also an admissible test function for $Q(\R^n)$ and, thus,

\begin{align*} Q(\R^n)&\leq \frac{\int a_nu\Delta u\,\vo_{g_E}}{\Vert u\Vert_{p_{crit},g_E}^2}\leq Q^\alpha_{p_{crit}}(M) \rho^{\alpha p_{crit}}(y) \Vert u\Vert_{p_{crit},g_E}^{p_{crit}-2}\\
& \leq Q^\alpha_{p_{crit}}(M)  \rho^{\alpha p_{crit}}(y)\rho^{-\alpha (p_{crit}-2)}(y)\leq Q^\alpha_{p_{crit}}(M)\rho^{2\alpha }(y)\\
&\leq Q^\alpha_{p_{crit}}(M),
\end{align*}
which contradicts the assumption that $Q(\R^n, g_E)>Q(M)$ and $\alpha \leq \alpha_0$ (see Remark \ref{alpha0}). Thus, there exists a $k>0$ with $m_p\leq k$.\\
b) From a), we know $\max v_p\leq k$ for all $p$. Thus, we can apply the interior Schauder and inner $L^p$-estimates as above and obtain, that $v_p\to v_\alpha$ in $C^2$ on each compact subset $K$. Moreover, $v_\alpha\in L^\infty$.
Together with \eqref{est_L2} and Lemma \ref{ine}, we get that $v_\alpha\in L^2$ and \[Lv_\alpha=(\limsup_{p\to p_{crit}} Q^{\alpha}_p) \rho^{\alpha p_{crit}}v_\alpha^{p_{crit}-1}\leq Q^\alpha_{p_{crit}} \rho^{\alpha p_{crit}} v_\alpha^{p_{crit}-1}.\] Clearly, by the Lemma of Fatou $\Vert \rho^\alpha v_\alpha\Vert_{p_{crit}}\leq 1$ and smoothness of $v_\alpha$ follows from standard elliptic regularity theory.\\
It remains to show that $\Vert \rho^\alpha v_\alpha\Vert_{p_{crit}}=1$. Firstly, we assume that $v_\alpha=0$:\\
Since \[Q_p\leq \frac{\int_M v_pLv_p\vo_g}{ \left( \int_M v_p^p\vo_g\right)^\frac{2}{p}}= Q^\alpha_p \Vert v_p\Vert_p^{-2}\] and $Q_p>0$, $\liminf_{p\to p_{crit}} Q_p >0$ (Lemma \ref{class}), we have
\[ \limsup_{p\to p_{crit}} \Vert v_p\Vert_p\leq \limsup_{p\to p_{crit}}\left( \frac{Q^\alpha_p}{Q_p}\right)^{\frac{1}{2}}\leq \left( \frac{Q^\alpha_{p_{crit}}}{\liminf_{p\to p_{crit}} Q_p}\right)^{\frac{1}{2}}=:c<\infty.\]
Thus, 
\[ \limsup_{p\to p_{crit}}\int_{M\setminus B_R} \rho^{\alpha p}v_p^p\vo_g\leq \limsup_{p\to p_{crit}}(\max_{M\setminus B_R} \rho^{\alpha p}\ \Vert v_p\Vert_p^p)\leq e^{-(R-\xi)\alpha p_{crit}} c^{{p_{crit}}}\]
where the last inequality follows with Remark \ref{ch_weight}.\\
Choose $R=R(\alpha)$ big enough such that $\limsup_{p\to p_{crit}} \int_{M\setminus B_R} \rho^{\alpha p}v_p^p\vo_g\leq \frac{1}{2}$. Then, with $\Vert \rho^\alpha v_p\Vert_p=1$ we get \[\limsup_{p\to p_{crit}} \int_{B_R} \rho^{\alpha p}v_p^p\vo_g\geq \frac{1}{2},\] which contradicts the assumption that $v_p\to v_\alpha=0$. Thus, $\Vert \rho^\alpha v_\alpha\Vert_{p_{crit}}>0$.\\
Using the smoothness of $v_\alpha\in L^2$ and that it weakly fulfills $Lv_\alpha\leq Q^\alpha_{p_{crit}}\rho^{\alpha p_{crit}} v_\alpha^{p_{crit}-1}$, we can compute \[0 < Q^\alpha_{p_{crit}}\leq \frac{\int_M v_\alpha Lv_\alpha\vo_g}{\left( \int_M \rho^{\alpha p_{crit}} v_\alpha^{p_{crit}}\vo_g\right)^{\frac{2}{p_{crit}}}}
\leq Q^\alpha_{p_{crit}} \Vert \rho^\alpha v_\alpha\Vert_{p_{crit}}^{p_{crit}-2}\]
and obtain $\Vert \rho^\alpha v_\alpha\Vert_{p_{crit}}=1$ and, hence, equality in $Lv_\alpha=Q^\alpha_{p_{crit}}\rho^{\alpha p_{crit}} v_\alpha^{p_{crit}-1}$.\\
In particular, we have $\limsup_{p\to p_{crit}} Q^\alpha_p=Q^\alpha_{p_{crit}}$.
\end{proof}

Similarly, we now take the limit for $\alpha\to 0$:

\begin{lem}\label{Qinfmass}
Let $v_\alpha\in H_1^2\cap L^\infty$ ($\alpha_0\geq \alpha >0$) be smooth and positive solutions of  $Lv_{\alpha}=Q^\alpha_{p_{crit}}\rho^{\alpha p_{crit}} v_{\alpha}^{p_{crit}-1}$ with $\Vert \rho^\alpha v_{\alpha}\Vert_{p_{crit}}=1$. Furthermore, let $M$ have bounded geometry and let  $Q(\R^n, g_E)>Q(M)$.\\
Then, there exists $k>0$ such that $\sup v_\alpha\leq k$ for all $\alpha$. Moreover, for $\alpha\to 0$, $v_\alpha\to v$ in $C^2$-topology on each compact set, $v\in H_1^2\cap L^\infty$ and $Lv= Q_{p_{crit}} v^{p_{crit}-1}$.\\ 
If additionally $\ov{Q(M,g)}>Q(M,g)$, we have $\Vert v\Vert_{p_{crit}}=1$.
\end{lem}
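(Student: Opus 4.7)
My plan is to follow the blueprint of Lemma \ref{conv}, now with $\alpha\to 0$ replacing $p\to p_{crit}$. By Lemma \ref{ine}.i, $Q^\alpha_{p_{crit}}$ decreases monotonically to $Q$ as $\alpha\searrow 0$, so it is uniformly bounded; combined with $\mu>0$ and bounded $\s$ this yields a uniform $H_1^2$-bound on $v_\alpha$ (via $\Vert v_\alpha\Vert_2^2\leq\mu^{-1}Q^\alpha_{p_{crit}}$ and bounded $\s$ from below), and the bounded-geometry Sobolev embedding $H_1^2\hookrightarrow L^{p_{crit}}$ then gives a uniform $L^{p_{crit}}$-bound on the $v_\alpha$.

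For the sup-bound, I assume by contradiction $m_\alpha:=v_\alpha(x_\alpha)=\sup v_\alpha\to\infty$, set $\delta_\alpha=m_\alpha^{-2/(n-2)}$, and rescale in geodesic normal coordinates $\phi_\alpha$ around $x_\alpha$ (with uniform radius $\epsilon<\text{inj}(M)$ by bounded geometry): $u_\alpha(x)=m_\alpha^{-1}v_\alpha(\phi_\alpha(\delta_\alpha x))$. Exactly as in Lemma \ref{conv}, interior Schauder yields a $C^2_{loc}$ limit $u\geq 0$ on $\R^n$ with $u(0)=1$ and $u\leq 1$. Since the scaling exponent $\frac{2p_{crit}}{p_{crit}-2}-n$ now vanishes, the identity $\int_{|x|<\epsilon/\delta_\alpha}u_\alpha^{p_{crit}}b_\alpha\,\vo_{g_E}=\int_{B_\epsilon(x_\alpha)}v_\alpha^{p_{crit}}\,\vo_g$, the uniform $L^{p_{crit}}$-bound, and Fatou put $u\in L^{p_{crit}}(\R^n)$. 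After passing to a subsequence with $\rho(x_\alpha)^{\alpha p_{crit}}\to\beta\in[0,1]$, the limit equation is $a_n\Delta u=\beta Q u^{p_{crit}-1}$ on $\R^n$. If $\beta=0$, then $u$ is a bounded nonnegative harmonic function with $u(0)=1$, so $u\equiv 1$ by Liouville, contradicting $u\in L^{p_{crit}}(\R^n)$. If $\beta>0$, then the stereographic-projection and removable-singularity argument of Lemma \ref{conv} makes $u$ admissible for $Q(\R^n)$, while the lower bound $\min_{B_\epsilon(x_\alpha)}\rho^{\alpha p_{crit}}\to\beta$ gives $\Vert u\Vert_{p_{crit},g_E}^{p_{crit}}\leq\beta^{-1}$; combining,
\[ Q(\R^n)\leq \beta Q\,\Vert u\Vert_{p_{crit},g_E}^{p_{crit}-2}\leq \beta^{(n-2)/n}Q\leq Q, \]
contradicting $Q(\R^n)>Q(M)$.

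With the sup-bound, interior Schauder and inner $L^p$-estimates produce a subsequence $v_\alpha\to v$ in $C^2_{loc}$ with $v\in L^\infty$. Passing pointwise to the limit in the Euler-Lagrange equation (using $Q^\alpha_{p_{crit}}\to Q$ and $\rho^{\alpha p_{crit}}\to 1$ pointwise) yields $Lv=Qv^{p_{crit}-1}$. Fatou on $\Vert v_\alpha\Vert_2^2\leq\mu^{-1}Q^\alpha_{p_{crit}}$ gives $v\in L^2$, and Remark \ref{alpha0}.ii (applied with $\alpha=0$ when $v\neq 0$; trivial when $v=0$) upgrades this to $v\in H_1^2$.

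Finally, suppose $\overline{Q(M,g)}>Q(M,g)$. Dominated convergence on each compact $K$ together with $\rho\leq 1$ give $\int_K v^{p_{crit}}=\lim_\alpha\int_K v_\alpha^{p_{crit}}\leq \lim_\alpha(\min_K\rho)^{-\alpha p_{crit}}=1$, so $\Vert v\Vert_{p_{crit}}\leq 1$. If $\Vert v\Vert_{p_{crit}}^{p_{crit}}=c<1$, I choose cutoffs $\chi_R\in C_c^\infty(M)$ supported in $M\setminus B_R$, equal to $1$ on a large annulus $B_S\setminus B_{R+T}$, with $|d\chi_R|\leq C/T$. The integration-by-parts identity
\[\int(\chi_R v_\alpha)L(\chi_R v_\alpha)\,\vo_g=\int\chi_R^2 v_\alpha Lv_\alpha\,\vo_g+a_n\int v_\alpha^2|d\chi_R|^2\,\vo_g\]
together with the Euler-Lagrange equation bounds the numerator of the Rayleigh quotient by $Q^\alpha_{p_{crit}}\int_{M\setminus B_R}\rho^{\alpha p_{crit}}v_\alpha^{p_{crit}}+O(T^{-2})$, while $\Vert\chi_R v_\alpha\Vert_{p_{crit}}^{p_{crit}}\geq\int_{M\setminus B_{R+T}}\rho^{\alpha p_{crit}}v_\alpha^{p_{crit}}$. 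Using $\chi_R v_\alpha$ as a test function for $Q(M\setminus B_R)$ and sending first $\alpha\to 0$, then $R,T,S\to\infty$ at compatible rates, yields $\overline{Q(M,g)}\leq Q(1-c)^{1-2/p_{crit}}=Q(1-c)^{2/n}<Q$, a contradiction. The main obstacle is precisely this last step: the cutoff parameters must be balanced so that the gradient-error term $O(T^{-2})$ vanishes while the escaping $L^{p_{crit}}$-mass stays of definite size $1-c$, and it is the quantitative gap $(1-c)^{2/n}<1$ that converts the qualitative hypothesis $\overline{Q}>Q$ into the contradiction.
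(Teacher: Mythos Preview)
Your argument tracks the paper's proof closely. The sup-bound via blow-up is essentially the same, only you package the two regimes through the single parameter $\beta=\lim_\alpha\rho(x_\alpha)^{\alpha p_{crit}}$ instead of the paper's dichotomy on $\alpha|x_\alpha|$; this is a clean reformulation and gives the sharper inequality $Q(\R^n)\leq\beta^{(n-2)/n}Q(M)$. The passage to the $C^2_{loc}$ limit $v$ and the equation $Lv=Qv^{p_{crit}-1}$ is identical to the paper. (Note that $\mu>0$ is not stated in the lemma, but both you and the paper invoke it for the uniform $L^2$-bound; it is an ambient hypothesis of the section.)

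The only substantive divergence is in proving $\Vert v\Vert_{p_{crit}}=1$. The paper splits into two cases: if $v\equiv 0$, a cutoff argument (with $\eta_r\equiv 1$ on all of $M\setminus B_{2r}$, no outer cutoff) yields $Q(M)\geq Q(M\setminus B_r)$ and hence $\ov{Q}\leq Q$, a contradiction; if $v\not\equiv 0$, the one-line algebraic estimate $Q\leq\int vLv/\Vert v\Vert_{p_{crit}}^2=Q\Vert v\Vert_{p_{crit}}^{p_{crit}-2}$ forces $\Vert v\Vert_{p_{crit}}\geq 1$. You instead aim for a single quantitative bound $\ov{Q}\leq Q(1-c)^{2/n}$. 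This works, but your cutoff description is inconsistent: you introduce an outer radius $S$ (so that $\chi_R v_\alpha\in C_c^\infty$), yet the denominator bound $\Vert\chi_R v_\alpha\Vert_{p_{crit}}^{p_{crit}}\geq\int_{M\setminus B_{R+T}}\rho^{\alpha p_{crit}}v_\alpha^{p_{crit}}$ requires $\chi_R\equiv 1$ on \emph{all} of $M\setminus B_{R+T}$. With the outer cutoff present you would need to control $\int_{M\setminus B_{S}}\rho^{\alpha p_{crit}}v_\alpha^{p_{crit}}$ uniformly in $\alpha$, which you cannot. The fix is to drop $S$ entirely (as the paper does): then $\chi_R v_\alpha\in H_1^2$ is supported in $M\setminus B_R$ and is admissible for $Q(M\setminus B_R)$ by density. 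With this correction your order of limits ($\alpha\to 0$ first, then $R\to\infty$, then $T\to\infty$) goes through and yields $\ov{Q}\leq Q(1-c)^{2/n}\leq Q$, contradicting $\ov{Q}>Q$ for every $c<1$. The paper's two-case split is shorter, since once $v\not\equiv 0$ is known the algebraic trick avoids any further cutoff analysis.
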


\begin{proof} First note that by Lemma \ref{max}, $\lim_{|x|\to \infty} v_\alpha =0$ where $|x|$ denotes the distance of $x$ to a fixed point $z\in M$. Then, the first part is proven in the same way as in Lemma \ref{conv}: Let $x_\alpha\in M$ be points where $v_\alpha$ attains its maximum $m_\alpha:=v_\alpha(x_\alpha)$. We assume that $m_\alpha\to \infty$. In the same way as in Lemma \ref{conv} we introduce rescaled geodesic coordinates $\phi_\alpha$ on $B_\epsilon(x_\alpha)$ (where $\epsilon$ is smaller than the injectivity radius of $M$) and obtain $u_\alpha=m_\alpha^{-1}v_\alpha(\phi_\alpha(\delta_\alpha x))$ with $\delta_\alpha=m_\alpha^{(2-p_{crit})/2}$ that fulfills the same (after changing the upper index $p$ to $p_{crit}$ and the lower $p$ to $\alpha$) Euler-Lagrange equation \eqref{EL-Rn}. Using interior Schauder and global $L^{p}$-estimates, one can again prove that $u_\alpha\in H_1^{q_{crit}}$ and, thus, uniformly bounded in $C^{0,\gamma}(K)$ for compact subsets $K\subset M$ and appropriate $\gamma$. Hence, $u_\alpha\to u$ in $C^2$ on compact subsets with $u\geq 0$ and $u(0)=1$.\\
An analogous estimate as in Lemma \ref{conv} shows that $\int_{|x|<\epsilon\delta_\alpha^{-1}} u^{p_{crit}}_\alpha b_\alpha\vo_{g_E}$ is bounded (independent on $\alpha$). Thus, the lemma of Fatou gives $u\in L^{p_{crit}}(\R^n)$ and, moreover, 
\[ a_n \Delta u=\limsup_{\alpha\to 0}(Q^\alpha_{p_{crit}} \rho_\alpha^{\alpha p_{crit}}u_\alpha^{p_{crit}-1})\leq Qu^{p_{crit}-1} \limsup_{\alpha\to 0} \max_{B_\epsilon(x_\alpha)} \rho^{\alpha p_{crit}}.\]
With Remark \ref{ch_weight} we get
\begin{align*} a_n \Delta u&\leq Qu^{p_{crit}-1} \limsup_{\alpha\to 0}   \max_{B_\epsilon(x_\alpha)} e^{-\alpha p_{crit}(|x|-\xi)}\leq Qu^{p_{crit}-1} \limsup_{\alpha\to 0} e^{-\alpha p_{crit}(|x_\alpha|-\xi-\epsilon)}\\
&= Qu^{p_{crit}-1} \limsup_{\alpha\to 0} e^{-\alpha p_{crit}|x_\alpha|}.
\end{align*}
In case that $\alpha |x_\alpha|\to \infty$ as $\alpha\to 0$, the last limes goes to zero and this leads to a contradiction as in Lemma \ref{conv} where the case of $x_p$ tending to infinity as $p\to p_{crit}$ was discussed. Thus, from now on we can assume that $\alpha |x_\alpha|$ is bounded.\\
Moreover, we can estimate 
as in Lemma \ref{conv}
that
\[  \int_{|x|<\epsilon\delta_\alpha^{-1}} u^{p_{crit}}_\alpha b_\alpha\vo_{g_E}\leq 
\max_{B_{\epsilon}(x_\alpha)} \rho^{-\alpha p_{crit}}.
\]
and with Remark \ref{ch_weight}  we get 
\[\int_{|x|<\epsilon\delta_\alpha^{-1}} u^{p_{crit}}_\alpha b_\alpha\vo_{g_E}\leq \max_{B_{\epsilon}(x_\alpha)} e^{\alpha p_{crit} (|x|+\xi)}=
e^{\alpha p_{crit} (|x_\alpha|+\epsilon+\xi)}\]
and, hence, $\Vert u\Vert_{p_{crit},g_E}\leq \liminf_{\alpha\to 0} e^{\alpha (|x_\alpha|+\epsilon+\xi)}=\liminf_{\alpha\to 0} e^{\alpha |x_\alpha|}$.\\
Thus, as in Lemma \ref{conv} we get
\begin{align*}
 Q(\R^n)&\leq \frac{\int a_nu\Delta u\,\vo_{g_E}}{\Vert u\Vert_{p_{crit},g_E}^2}\leq Q(M) \limsup_{\alpha\to 0} e^{-\alpha |x_\alpha|p_{crit}} \Vert u\Vert_{p_{crit},g_E}^{p_{crit}-2}\\
& \leq Q(M) \limsup_{\alpha\to 0} e^{-\alpha |x_\alpha|p_{crit}}\liminf_{\alpha\to 0}e^{\alpha |x_\alpha|(p_{crit}-2)}\leq  c\, Q(M)
\end{align*}
where $c\geq 1$ and the last inequality follows since the  both limits $\limsup_{\alpha\to 0} e^{-\alpha |x_\alpha|p_{crit}}$  and $\liminf_{\alpha\to 0}\ e^{\alpha |x_\alpha|(p_{crit}-2)}$ are finite and $\geq 1$ since we assumed that $\alpha |x_\alpha|$ is bounded. But this gives a contradiction to $Q(\R^n)>Q(M)$. Hence, $v_\alpha$ has to be bounded uniformly in $\alpha$.\\
Then we can again use interior Schauder and inner $L^p$ estimates and obtain $v_\alpha\to v$ in $C^2$ on compact subsets with $Lv=Qv^{p_{crit}-1}$. Moreover, as before we obtain from \eqref{h1_a} that $v_\alpha$ are uniformly bounded in $L^2$ and, hence, $v\in L^2$.\\ 
Assume now that $\ov{Q(M,g)}>Q(M,g)\geq 0$. Clearly, also $\rho^\alpha v_\alpha\to v$ in $C^2$ on compact subsets, $\Vert v\Vert_{p_{crit}}\leq 1$ and smoothness of $v$ follows again from elliptic regularity theory. We have to show that $\Vert v\Vert_{p_{crit}}=1$.\\
Firstly assume that $v_\alpha\to v\equiv 0$. Then, for a fixed ball $B_r:=B_r(z)$ around $z\in M$ with radius $r$ we get that
\begin{align*}
Q(M)&=\liminf_{\alpha\to 0} Q^\alpha_{p_{crit}}(M)= \liminf_{\alpha\to 0} \int_M v_\alpha Lv_\alpha\,\vo_g\\
& \geq \liminf_{\alpha\to 0} \int_{M\setminus B_r} v_\alpha Lv_\alpha\,\vo_g +\liminf_{\alpha\to 0} \int_{B_r} v_\alpha Lv_\alpha\,\vo_g,
\end{align*}
where the first equality is given by Lemma \ref{ine}.i and the second equality follows from $Lv_\alpha = Q^\alpha_{p_{crit}}\rho^{\alpha p_{crit}} v_\alpha^{p_{crit}-1}$ and $ \Vert \rho^\alpha v_\alpha\Vert_{p_{crit}}=1$. The last summand vanishes as $\alpha\to 0$. In order to estimate the other summand, we introduce a smooth cut-off function $\eta_r\leq 1$ with support in $M\setminus B_r$ and $\eta_r\equiv 1$ on $M\setminus B_{2r}$. Then, for $\alpha\to 0$
\begin{align*}
&\left|\int_{M\setminus B_r}\!\eta_r v_\alpha L(\eta_r v_\alpha)\,
\vo_g-\int_{M\setminus B_r}\!\! v_\alpha Lv_\alpha\,\vo_g \right|\\
\phantom{hh}&= \left| \int_{B_{2r}\setminus B_r}\!\!\eta_r v_\alpha L(\eta_r v_\alpha)\,\vo_g -\int_{B_{2r}\setminus B_r}\!\! v_\alpha Lv_\alpha\,\vo_g  \right| \to 0.
\end{align*}
since $v_\alpha\to 0$ in $C^2$ on each compact set. Hence, with $\int_M v_\alpha^{p_{crit}}\vo_g\geq \int_M (\rho^{\alpha} v_\alpha)^{p_{crit}}\vo_g=1$ and Lemma \ref{ine}.i we obtain
\begin{align*}
 Q(M)&= \liminf_{\alpha\to 0} Q^\alpha_{p_{crit}}(M) \geq \liminf_{\alpha\to 0} \int_{M\setminus B_r}\!\!\eta_rv_\alpha L(\eta_r v_\alpha)\,\vo_g\\
 &\geq \liminf_{\alpha\to 0} Q^\alpha_{p_{crit}}(M\setminus  B_r) \left( \int_{M\setminus B_r}\!\! (\eta_r v_\alpha)^{p_{crit}}\,\vo_g\right)^{\frac{2}{{p_{crit}}}}\\
 &=  \liminf_{\alpha\to 0} Q^\alpha_{p_{crit}}(M\setminus  B_r) \left( \int_M v_\alpha^{p_{crit}}\vo_g- \int_{B_{2r}}\!\! (1-\eta_r^{p_{crit}})v_\alpha^{p_{crit}}\,\vo_g\right)^{\frac{2}{p_{crit}}}\\
 &\geq Q(M\setminus B_r),
\end{align*}
where the integral over $B_{2r}$ vanishes again since $v_\alpha\to 0$ on compact sets.\\
Thus, $\ov{Q(M)}\leq Q(M)$ which contradicts the assumption. Thus, we have $\Vert v\Vert_{p_{crit}}>0$.\\
Since $Lv= Q(M) v^{p_{crit}-1}$, $\Vert v\Vert_{p_{crit}}\leq 1$ and $v\in L^2$, we further obtain  that
\[ Q(M)\leq\frac{\int_M vLv\,\vo_g}{\Vert v\Vert_{p_{crit}}^2}=Q(M)\Vert v\Vert_{p_{crit}}^{p_{crit}-2}\leq Q(M),\]
i.e. $\Vert v\Vert_{p_{crit}}=1$.
\end{proof}

\begin{proof}[Proof of Theorem \ref{Yam}]
Combining Lemma \ref{weisub} and \ref{Qinfmass} with \cite[Cor. 2]{Skrz} (cf. Appendix \ref{app} Theorem \ref{embcon}) where the required Sobolev embeddings are proven for manifolds of bounded geometry, we obtain Theorem \ref{Yam}. 
\end{proof}
For almost homogeneous manifolds with uniformly positive scalar curvature, we can drop the assumption on the Yamabe invariant at infinity and reprove a result of Akutagawa:

\begin{thm}\label{isoYam}
Let $(M^n,g)$ be a manifold of bounded geometry, $\s\geq c>0$ for a constant $c$ and $Q(S^n)>Q(M,g)$. Furthermore, we assume that $(M,g)$ is almost homogeneous, i.e. there exists a relatively compact set $U\subset\subset M$ such that for all $x\in M$ there is an isometry $f:M\to M$ with $f(x)\in U$. 
Then, there is a positive smooth solution $v\in H_1^2\cap L^\infty$ of the Euler-Lagrange equation $Lv=Q(M)v^{p_{crit}-1}$ with $\Vert v\Vert_{p_{crit}}=1$.
\end{thm}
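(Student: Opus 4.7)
The plan is to reuse the weighted subcritical scheme of Lemmas \ref{weisub} and \ref{conv} together with the first portion of Lemma \ref{Qinfmass}, and to replace the use of $\ov{Q(M,g)}>Q(M,g)$ (which in Lemma \ref{Qinfmass} served to exclude a vanishing limit) by an isometry-translation argument powered by almost homogeneity.

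First, the hypotheses of the earlier lemmas are in place: $\s\geq c>0$ gives $\int_MvLv\,\vo_g\geq a_n\Vert\d v\Vert_2^2+c\Vert v\Vert_2^2$, hence $\mu\geq c>0$; bounded geometry supplies the compact weighted Sobolev embeddings of Theorem \ref{embcon} and an upper bound on $\s$; and $Q(S^n)=Q(\R^n,g_E)>Q(M,g)$ (Theorem \ref{subset}) plays the role of the gap hypothesis. So Lemmas \ref{weisub} and \ref{conv} produce, for every small $\alpha>0$, a positive smooth $v_\alpha\in H_1^2\cap L^\infty$ with $Lv_\alpha=Q^\alpha_{p_{crit}}\rho^{\alpha p_{crit}}v_\alpha^{p_{crit}-1}$ and $\Vert\rho^\alpha v_\alpha\Vert_{p_{crit}}=1$, while the part of Lemma \ref{Qinfmass} not relying on $\ov Q>Q$ yields a uniform bound $v_\alpha\leq k$ together with a $C^2_{\mathrm{loc}}$ limit $v\in L^2\cap L^\infty$ satisfying $Lv=Q(M)v^{p_{crit}-1}$ and $\Vert v\Vert_{p_{crit}}\leq 1$. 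What is left is to rule out $v\equiv 0$ and to promote $\Vert v\Vert_{p_{crit}}$ to $1$.

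Next, I would exploit almost homogeneity to translate concentrations back into $U$. The chain
\[1=\int_M\rho^{\alpha p_{crit}}v_\alpha^{p_{crit}}\,\vo_g\leq(\sup v_\alpha)^{p_{crit}-2}\Vert v_\alpha\Vert_2^2\leq(\sup v_\alpha)^{p_{crit}-2}\mu^{-1}Q^\alpha_{p_{crit}},\]
combined with $Q^\alpha_{p_{crit}}\leq Q(S^n)$ (Lemma \ref{ine}.i and Remark \ref{alpha0}.i), produces a uniform $m_0>0$ with $\sup v_\alpha\geq m_0$. Let $x_\alpha\in M$ be a maximum point (Lemma \ref{max}) and pick an isometry $f_\alpha$ with $f_\alpha(x_\alpha)\in U$. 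Setting $\tilde v_\alpha:=v_\alpha\circ f_\alpha^{-1}$ and $\tilde\rho_\alpha:=\rho\circ f_\alpha^{-1}$, isometry invariance of $L$, $\s$ and every $L^p$-norm preserves both the Euler-Lagrange equation and the normalization $\Vert\tilde\rho_\alpha^\alpha\tilde v_\alpha\Vert_{p_{crit}}=1$, but the maximum of $\tilde v_\alpha$ now sits in $\ov U$. Passing to a subsequence gives $f_\alpha(x_\alpha)\to x_\infty\in\ov U$, and uniform Schauder estimates (by bounded geometry) supply $\tilde v_\alpha\to\tilde v$ in $C^2_{\mathrm{loc}}$; in particular $\tilde v(x_\infty)\geq m_0>0$, so $\tilde v\not\equiv 0$.

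Finally, I would identify the limit equation and force $\Vert\tilde v\Vert_{p_{crit}}=1$. Since $\tilde\rho_\alpha(y)$ is essentially $e^{-d(y,f_\alpha(z))}$ (Remark \ref{ch_weight}) and the triangle inequality yields $|\alpha\,d(y,f_\alpha(z))-\alpha\,d(y',f_\alpha(z))|\leq\alpha\,d(y,y')\to 0$, a subsequence of $\tilde\rho_\alpha^{\alpha p_{crit}}$ converges uniformly on compact sets to a constant $c_1\in[0,1]$, so $L\tilde v=Q(M)\,c_1\,\tilde v^{p_{crit}-1}$. If $c_1=0$, then $L\tilde v=0$; testing with $\tilde v$ (legitimate after the cut-off argument of Remark \ref{alpha0}.ii, as $\tilde v\in L^2\cap L^\infty\subset L^{p_{crit}}$) and invoking $\s\geq c>0$ forces $\tilde v\equiv 0$, contradicting $\tilde v(x_\infty)\geq m_0$. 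Hence $c_1>0$. Fatou applied to $\tilde\rho_\alpha^\alpha\tilde v_\alpha\to c_1^{1/p_{crit}}\tilde v$ gives $\Vert\tilde v\Vert_{p_{crit}}\leq c_1^{-1/p_{crit}}$, while $\tilde v$ used as an admissible test function in $Q(M)$ yields $\Vert\tilde v\Vert_{p_{crit}}\geq c_1^{-1/(p_{crit}-2)}$. Because $p_{crit}-2<p_{crit}$, these two bounds are compatible only for $c_1=1$, in which case $\Vert\tilde v\Vert_{p_{crit}}=1$; elliptic regularity and the strong maximum principle then make $\tilde v$ smooth and positive. The main obstacle is precisely this last step: one must extract from the two-sided $L^{p_{crit}}$-control not merely nontriviality of $\tilde v$ but the sharp normalization, which simultaneously pins the weight-loss factor $c_1$ down to $1$.
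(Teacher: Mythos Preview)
Your proof follows the same route as the paper's --- both pass through the weighted subcritical scheme (Lemmas \ref{weisub}, \ref{conv}, and the first half of Lemma \ref{Qinfmass}) and then exploit almost homogeneity to translate the maximum points into $\ov{U}$, thereby ruling out a vanishing limit. Two differences deserve comment.

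First, the paper obtains the uniform lower bound on $\sup v$ directly from the equation at a maximum point: $\Delta v_p(x_p)\geq 0$ together with $\s\geq c$ yields $Q^{\alpha}_p\,v_p^{p-2}(x_p)\geq c$. You instead use the chain $1\leq(\sup v_\alpha)^{p_{crit}-2}\mu^{-1}Q^\alpha_{p_{crit}}$. Both arguments are valid and give what is needed.

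Second --- and here your argument is actually more careful than the paper's sketch --- you track what happens to the weight under the isometries. After translating, the equation reads $L\tilde v_\alpha=Q^\alpha_{p_{crit}}\tilde\rho_\alpha^{\alpha p_{crit}}\tilde v_\alpha^{p_{crit}-1}$ with $\tilde\rho_\alpha=\rho\circ f_\alpha^{-1}$, and you extract a subsequential constant limit $c_1\in[0,1]$ for $\tilde\rho_\alpha^{\alpha p_{crit}}$. The paper simply writes ``without loss of generality $x_p\in U$'' and then invokes ``as in the proof of Lemma \ref{Qinfmass}'', but that final step tacitly uses the limit equation $L\tilde v=Q\tilde v^{p_{crit}-1}$, i.e.\ $c_1=1$, which is not automatic once the weight has been transported. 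Your two-sided $L^{p_{crit}}$-control (Fatou gives $\Vert\tilde v\Vert_{p_{crit}}\leq c_1^{-1/p_{crit}}$, while testing against $Q$ gives $\Vert\tilde v\Vert_{p_{crit}}\geq c_1^{-1/(p_{crit}-2)}$) forces $c_1=1$ and closes this point cleanly.
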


\begin{proof}
Due to the existence of the isometries, $M$ has bounded geometry. Moreover, since the scalar curvature is  uniformly positive, $\mu$ and $Q$ are positive. Hence, with Lemma  \ref{weisub}, we obtain positive solutions $v_{\alpha,p}\in H_1^2$ ($\alpha>0$, $p\in [2,p_{crit})$) of  $Lv_{\alpha,p}=Q^\alpha_p\rho^{\alpha p} v_{\alpha,p}^{p-1}$ with $\Vert \rho^\alpha v_{\alpha ,p}\Vert_p= 1$. Lemma \ref{conv} and \ref{Qinfmass} show that for a certain subsequence $v_p=v_{\alpha(p) ,p}$ converges to  $v$ in $C^2$-topology on each compact set. Moreover, $v\in H_1^2\cap L^\infty$ and $Lv \leq Q v^{p_{crit}-1}$. We need to show that $\Vert v\Vert_{p_{crit}}=1$: Due to Lemma \ref{max}, each $v_p$ has a maximum.  With the isometries, we can always pull the point $x_p$ where $v_{p}$ attains its maximum into the subset $U$.

Thus, without loss of generality we assume that $x_{p}\in U$. Since $v_p$ is maximal in $x_p$, we have that $\Delta v_p(x_p)\geq 0$ and, thus, $Qv_{p}^{p-2}(x_{p})\geq \s (x_{p}) \geq c$. Let $x\in \ov{U}$ be the limit of a convergent subsequence of $x_p$ as $p\to p_{crit}$.   Then $Q v^{p_{crit}-2}(x)\geq c >0$. Since $Q>0$ and $v$ is smooth, we have $0<\Vert v\Vert_{p_{crit}}$ and, thus, as in the proof of Lemma \ref{Qinfmass}, $\Vert v\Vert_{p_{crit}}=1$. Hence, we have a positive solution $v\in H_1^2$ of $Lv=Qv^{p-1}$ with $\Vert v\Vert_{p_{crit}}=1$.
\end{proof}

\begin{rem}
If there exist such isometries, as described in Theorem \ref{isoYam}, we have $\ov{Q(M)}=Q(M)$.\\
This can be seen when taking a minimizing sequence $v_i\in C_c^\infty(M)$ with $\Vert v_i\Vert_{p_{crit}}=1$ and $\int_M v_iLv_i\vo_g\to Q(M)$. Denote the diameter of $\mathrm{supp}\, v_i\cup U$ by $d_i$. Let $y\in U$ be fixed. We define $\tilde{v_i}=v_i\circ f_i$ where $f_i^{-1}$ is an isometry that a given point $x$ with $\mathrm{dist}(x, U)=i+d_i$ to a point in  $U$ Then, $\tilde{v_i}\in C_c^\infty(M\setminus B_i(y))$, $\int_M\tilde{v_i}L\tilde{v_i}\vo_g\to Q(M)$  and $\Vert \tilde{v_i}\Vert_{p_{crit}}=1$. Thus, $\ov{Q(M)}= Q(M)$.
\end{rem}

\begin{ex}\label{ex_ex}Consider the model spaces $(Z=S^{n-k-1}\times \mathbb{R}^{k+1},\, g_c=g_{S^{n-k-1}}+g_{c,k+1})$ which is a product of the standard sphere and the space $\mathbb{R}^{k+1}$ equipped with a metric of constant sectional curvature $-c^2k(k+1)$, $c\in [0,1]$. Those spaces appeared in \cite{ADH} and  have the symmetries required in the last remark. Their scalar curvature is constant and given by $\s_{g_c}=-k(k+1)c^2+(n-k-1)(n-k-2)$, e.g. for $k<\frac{n-2}{2}$ the scalar curvature is positive for all $c\in (0,1]$. Note that for $c=1$ $(Z,g_1)$ is conformal to $S^n\setminus S^k$ and thus $Q(Z,g_1)=Q(S^n)$.\\
Assuming that $c$ is chosen such that $\s_{g_c}$ is positive and $Q(Z,g_c)<Q(S^n)$, Theorem \ref{isoYam} shows that for those spaces there is a solution of the Euler-Lagrange equation. 
\end{ex}

Moreover, in \cite{ADH}, besides the Yamabe invariant from above the following invariant is used:
\[ \mu^{(1)}(M,g)=\inf\{ \mu\in \R\ |\ \exists u\in L^\infty\cap L^2, u\ne 0, \Vert u\Vert_{p_{crit}}\leq 1:\ L_gu=\mu u^{p_{crit}-1} \}.\]
The proof of \cite[Lem. 3.5]{ADH} shows, that if $(M,g)$ is a complete Riemannian manifold it is $\mu^{(1)}(M,g)\geq Q(M,g)$.

\begin{cor}
Let the assumptions of Theorem \ref{Yam} or of Theorem \ref{isoYam} be fulfilled for a manifold $(M,g)$. Then $\mu^{(1)}(M,g)=Q(M,g)$. 
\end{cor}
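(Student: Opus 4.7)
The proof reduces to showing the two inequalities $\mu^{(1)}(M,g)\geq Q(M,g)$ and $\mu^{(1)}(M,g)\leq Q(M,g)$, and in fact one direction is already cited from the literature, so only the other direction requires work.

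The plan is as follows. The inequality $\mu^{(1)}(M,g)\geq Q(M,g)$ holds for any complete Riemannian manifold by \cite[Lem. 3.5]{ADH}, as explicitly noted just above the statement of the Corollary, so nothing more is needed for that direction. For the reverse inequality $\mu^{(1)}(M,g)\leq Q(M,g)$, I would simply exhibit an admissible candidate in the set defining $\mu^{(1)}$ whose associated eigenvalue parameter is precisely $Q(M,g)$. Under either of the two hypothesis sets, Theorem \ref{Yam} or Theorem \ref{isoYam} furnishes a smooth positive function $v\in H_1^2\cap L^\infty$ with $\Vert v\Vert_{p_{crit}}=1$ solving $Lv=Q(M,g)v^{p_{crit}-1}$.

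To check that this $v$ is admissible in the definition of $\mu^{(1)}(M,g)$, I need $u\in L^\infty\cap L^2$, $u\ne 0$, $\Vert u\Vert_{p_{crit}}\leq 1$, and a relation $Lu=\mu u^{p_{crit}-1}$ for some $\mu\in\R$. By construction $v\in L^\infty$; the embedding $H_1^2\hookrightarrow L^2$ built into the definition of $H_1^2$ gives $v\in L^2$; $v$ is positive hence nonzero; and $\Vert v\Vert_{p_{crit}}=1\leq 1$. Taking $\mu=Q(M,g)$, the Euler--Lagrange equation is satisfied, so $Q(M,g)$ lies in the set whose infimum defines $\mu^{(1)}(M,g)$, proving $\mu^{(1)}(M,g)\leq Q(M,g)$.

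Combining the two inequalities yields $\mu^{(1)}(M,g)=Q(M,g)$. There is no real obstacle here: the work was done in Theorems \ref{Yam} and \ref{isoYam}, and this corollary is essentially a repackaging of the conclusion that the infimum $Q(M,g)$ is attained by an admissible solution of the Euler--Lagrange equation in $L^\infty\cap L^2$.
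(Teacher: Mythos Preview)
Your proof is correct and follows essentially the same approach as the paper: invoke the solution $v\in H_1^2\cap L^\infty$ from Theorem \ref{Yam} or \ref{isoYam} to obtain $\mu^{(1)}\leq Q$, and combine with the cited inequality $\mu^{(1)}\geq Q$ from \cite[Lem. 3.5]{ADH}. Your explicit verification that $v$ is admissible in the definition of $\mu^{(1)}$ is slightly more detailed than the paper's version but adds nothing new.
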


\begin{proof}
From Theorem  \ref{Yam} or \ref{isoYam} we know that there is a smooth solution $v\in H_1^2\cap L^\infty$ with $L_gv=Q v^{p_{crit}-1}$ and $\Vert v\Vert_{p_{crit}}=1$. Thus, $\mu^{(1)}\leq Q$. Hence, with $\mu^{(1)}(M,g)\geq Q(M,g)$ from above $Q(M,g)=\mu^{(1)}(M,g)$.
\end{proof}

\begin{appendix}\section{Embeddings on manifolds of bounded geometry}\label{app}

In \cite[Cor. 3.19]{Heb} there are already given continuous Sobolev embeddings for  manifolds of bounded geometry:

\begin{thm}\ \!\!\cite[Thm. 3.18 and Cor. 3.19]{Heb}
Let $(M^n,g)$ be a manifold of bounded geometry. Then $H_1^q(M)$ is continuously embedded in $L^p(M)$ for $\frac{1}{p}=\frac{1}{q}-\frac{1}{n}$.
\end{thm}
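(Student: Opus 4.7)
The plan is to reduce the inequality on $M$ to the classical Euclidean Sobolev inequality on balls, using the uniformity that bounded geometry provides. The key local-to-global mechanism is a good partition of unity.

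First I would exploit bounded geometry to build the geometric scaffolding. Since $(M,g)$ has positive injectivity radius $i_0>0$ and bounded curvature, one can fix $r\in(0,i_0/2)$ and construct a countable covering $\{B_r(x_i)\}_{i\in I}$ of $M$ by geodesic balls such that the covering $\{B_{2r}(x_i)\}$ has finite multiplicity $N$ independent of $i$ (a standard packing argument using the uniform volume comparison coming from bounded Ricci curvature). Bounded geometry also yields a subordinate partition of unity $\{\chi_i\}$ with $0\leq\chi_i\leq 1$, $\sum_i\chi_i=1$, and $|\d\chi_i|\leq K$ for a constant $K$ independent of $i$; the derivatives are controlled uniformly because one may produce the $\chi_i$ by pulling back a fixed Euclidean bump function through normal coordinates, and the coordinate changes have uniformly bounded derivatives by bounded geometry.

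Next I would localize. Fix $u\in C_c^\infty(M)$ and set $u_i=\chi_i u$, supported in $B_r(x_i)$. In geodesic normal coordinates $\phi_i:B_r(0)\subset\R^n\to B_r(x_i)$ the metric $g$ is, uniformly in $i$, comparable to $g_E$ in the $C^0$-sense (i.e. $C^{-1}g_E\leq \phi_i^*g\leq Cg_E$ with $C$ independent of $i$), because bounded curvature gives uniform two-sided bounds on the components $g_{rq}$ in a fixed normal chart. Consequently the norms $\|u_i\|_{L^p(g)}$, $\|u_i\|_{L^q(g)}$ and $\|\d u_i\|_{L^q(g)}$ are each equivalent to their Euclidean counterparts on the chart with comparison constants independent of $i$.

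Then I would apply the classical Euclidean Sobolev inequality (for functions compactly supported in $B_r(0)$) to each $\phi_i^*u_i$, getting
\[
\|u_i\|_{L^p(g)}^q\leq C\bigl(\|\d u_i\|_{L^q(g)}^q+\|u_i\|_{L^q(g)}^q\bigr),
\]
with $C$ uniform in $i$. Using $\d u_i=\chi_i\d u+u\,\d\chi_i$ and the uniform bound $|\d\chi_i|\leq K$, this becomes
\[
\|u_i\|_{L^p(g)}^q\leq C'\bigl(\|\chi_i\d u\|_{L^q(g)}^q+\|\chi_i u\|_{L^q(g)}^q\bigr).
\]

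Finally I would sum over $i$. Since $p\geq q$ one has $\sum_i\|u_i\|_{L^p}^q\geq\bigl(\sum_i\|u_i\|_{L^p}^p\bigr)^{q/p}$ up to constants, and using $\sum_i\chi_i=1$ together with the multiplicity bound $N$ on $\{B_{2r}(x_i)\}$, one controls $\sum_i\|\chi_i\d u\|_{L^q}^q\leq N\|\d u\|_{L^q}^q$ and similarly for $u$. A short manipulation then yields
\[
\|u\|_{L^p(M)}\leq C(n,q,r,N,K)\,\|u\|_{H_1^q(M)},
\]
and by density of $C_c^\infty(M)$ in $H_1^q(M)$ the embedding extends to all of $H_1^q$.

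The main obstacle is not any single step but the bookkeeping of uniformity: one must check that the chart-by-chart Sobolev constants, the comparison constants between $g$ and $g_E$ in normal coordinates, the $C^1$-bounds on the partition of unity, and the multiplicity of the covering can all be chosen independent of the point on $M$. Each of these is exactly what bounded geometry was designed to give, so the argument goes through once the covering and partition of unity are set up correctly.
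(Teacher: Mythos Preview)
The paper does not prove this statement at all; it is quoted from Hebey's monograph and used as a black box. Your outline is precisely the standard argument one finds there: a uniformly locally finite cover by normal-coordinate balls, a subordinate partition of unity with uniform $C^1$-bounds, the Euclidean Sobolev inequality applied in each chart with chart-independent constants (thanks to the uniform comparability of $g$ and $g_E$ in normal coordinates), and the $\ell^q\hookrightarrow\ell^p$ inequality to reassemble. So there is nothing to compare against in the present paper, and your approach is correct.

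One small slip worth fixing: after expanding $\d u_i=\chi_i\,\d u+u\,\d\chi_i$, the contribution $\|u\,\d\chi_i\|_{L^q}$ is bounded by $K\,\|u\|_{L^q(\mathrm{supp}\,\chi_i)}$, not by $\|\chi_i u\|_{L^q}$, since $\chi_i$ may be small where $\d\chi_i$ is not. This does not affect the conclusion, because your summation via the multiplicity bound still gives $\sum_i\|u\|_{L^q(\mathrm{supp}\,\chi_i)}^q\leq N\,\|u\|_{L^q}^q$.
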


But unfortunately those embeddings are not compact. Therefore, we will work with weighted Sobolev embeddings:\\
Let $\rho: M\to (0,\infty)$ be a radial admissible weight, see \cite[Def. 2 and 4]{Skrz}. 

\begin{rem}\label{ch_weight}
In the following, we will choose 
$\rho(x)=\exp (-r)$ where $r$ is a smooth function with $ |r(x)-|x||<\xi$ for all $x\in M$ and a fixed $\xi>0$ where $|x|:=\mathrm{dist} (x,z)$ for fixed $z\in M$. On manifolds of bounded geometry, such a function $r$ always exists \cite[Lem. 2.1]{Shu}.
\end{rem}

We define the weighted $L^p$-space $\rho^\alpha L^p:=\{ f\ |\ \rho^\alpha f\in L^p (M)\}$ equipped with the norm $\Vert f\Vert_{\rho^\alpha L^p}:=\Vert \rho^\alpha f\Vert_{L^p}$ for $p\geq 1$.

\begin{thm}\ \!\!\label{embcon}\cite[Cor. 2]{Skrz}
If the manifold $(M,g)$ has bounded geometry, for each $2\leq p<p_{crit}=\frac{2n}{n-2}$ the Sobolev embedding $H_1^2 \hookrightarrow \rho^\alpha L^p$ is continuous for $\alpha\geq 0$ and compact  for $\alpha >0$.
\end{thm}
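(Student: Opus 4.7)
The plan is to split the assertion of Theorem \ref{embcon} into three steps: continuity at $\alpha=0$, continuity for $\alpha>0$, and compactness for $\alpha>0$.

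\emph{Step 1 (continuity at $\alpha=0$).} Hebey's bounded-geometry Sobolev theorem quoted above gives a continuous embedding $H_1^2(M)\hookrightarrow L^{p_{crit}}(M)$. Combined with the trivial continuous inclusion $H_1^2\hookrightarrow L^2$ and the elementary interpolation inequality $\Vert u\Vert_p\le \Vert u\Vert_2^{1-\theta}\Vert u\Vert_{p_{crit}}^\theta$ (with $\tfrac{1}{p}=\tfrac{1-\theta}{2}+\tfrac{\theta}{p_{crit}}$, $\theta\in[0,1]$), this yields a continuous embedding $H_1^2\hookrightarrow L^p$ for every $p\in[2,p_{crit}]$.

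\emph{Step 2 (continuity for $\alpha>0$).} Since $0<\rho\le 1$, one has the pointwise estimate $\Vert \rho^\alpha u\Vert_p\le \Vert u\Vert_p\le C\Vert u\Vert_{H_1^2}$ by Step~1, and continuity is immediate.

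\emph{Step 3 (compactness for $\alpha>0$).} Let $(u_k)\subset H_1^2$ be bounded. The idea is to split $M=B_R\cup (M\setminus B_R)$ around the reference point $z$ and handle the two regions separately. Using Remark \ref{ch_weight} one has $\rho(x)\le e^{\xi}e^{-|x|}$, so on the complement
\[ \int_{M\setminus B_R}\rho^{\alpha p}|u_k|^p\,\vo_g \le e^{\alpha p\xi}\, e^{-\alpha p R}\,\Vert u_k\Vert_p^p \le C_1\, e^{-\alpha p R}, \]
which tends to $0$ uniformly in $k$ as $R\to\infty$, using the uniform $L^p$-bound provided by Step~1. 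On $\overline{B_R}$, bounded geometry (positive injectivity radius and uniform curvature bounds) lets us cover $B_R$ by finitely many normal coordinate charts that are quasi-isometric to Euclidean balls with uniform constants; pulling back, transplanting Euclidean Rellich--Kondrachov in each chart, and patching yields compactness of $H_1^2(B_R)\hookrightarrow L^p(B_R)$ for every $p<p_{crit}$. Hence a subsequence of $(u_k)$ converges in $L^p(B_R)$, and since $\rho^\alpha\le 1$ also in $\rho^\alpha L^p(B_R)$. Finally, diagonalizing over an exhaustion $R_j\nearrow\infty$ produces a subsequence $(u_{k_j})$ Cauchy in $\rho^\alpha L^p(B_{R_j})$ for every $j$; combined with the uniform tail bound, $(u_{k_j})$ is Cauchy in $\rho^\alpha L^p(M)$.

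The main obstacle is the tail estimate in Step~3: compactness must hold for \emph{all} $\alpha>0$, so one cannot afford to invoke any integrability of the weight itself (which would require a volume growth assumption). The workaround is the pointwise bound $\rho^\alpha\le e^{\xi\alpha}e^{-\alpha R}$ on $M\setminus B_R$ coupled with the already established unweighted continuous embedding $H_1^2\hookrightarrow L^p$ from Step~1, which together give the uniform-in-$k$ decay of the tail without any volume restriction; the subcriticality $p<p_{crit}$ enters only through the compact local Rellich--Kondrachov step, while continuity at $p=p_{crit}$ would fail to be compact precisely on the local piece.
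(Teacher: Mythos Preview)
The paper does not give its own proof of this theorem: it is quoted verbatim as \cite[Cor.~2]{Skrz}, and the only accompanying remark is the sentence ``The hard part of the above theorem is to establish compactness.'' There is therefore no argument in the paper to compare against; your proposal supplies a self-contained proof where the paper simply invokes the literature.

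Your argument is correct and is in fact the standard route to such weighted Rellich--Kondrachov statements. A couple of points are worth making explicit. First, the finiteness of the chart cover of $B_R$ in Step~3 relies on a positive injectivity radius together with a curvature-based volume bound (Bishop--Gromov upper bound on $\vol(B_R)$ plus a uniform lower bound on the volume of small balls), which yields a packing argument; the paper's definition of bounded geometry only lists curvature bounds, but it tacitly uses $\mathrm{inj}(M)>0$ throughout (e.g.\ in Remark~\ref{est} and Lemma~\ref{max}), so you are consistent with its implicit conventions. Second, in the diagonal step one should note that the tail bound is uniform in $k$, so that for any $\epsilon>0$ one first fixes $R$ with tail $<\epsilon$, then uses Cauchyness on $B_{R_j}\supset B_R$ for that fixed $j$; you say this, but it is the place where careless arguments sometimes slip. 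Your closing observation that subcriticality is needed only for the local Rellich step, while the tail control works for all $p\le p_{crit}$ via the unweighted continuous embedding, is exactly the right diagnosis and matches the paper's one-line commentary.
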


The hard part of the above theorem is to establish compactness. 
 
\begin{rem}\label{est}Let $(M,g)$ be a manifold with bounded geometry.\\
i) (Inner $L^p$-estimate)\cite[proof of Thm. 8.8 ]{GT}
Let $\epsilon \in (0, \frac{1}{2}\text{inj}(M))$ where $\text{inj}$ denotes the injectivity radius. Then there exists a constant $C_\epsilon(q)$ such that for all $x\in M$
\[ \Vert u\Vert_{H_2^q(B_\epsilon(x))}\leq C_\epsilon(q) (\Vert u\Vert_{L^q(B_{2\epsilon}(x))} +\Vert f\Vert_{L^q(B_{2\epsilon}(x))})\] 
for all $q\geq 1$, $f\in L_{loc}^q$ and where $u\in H_{2, loc}^q$ is a solution of $Lu=f$.\\ 
ii) (Imbedding) Let $n< q$ and $0\leq \gamma\leq 1-\frac{n}{q}$. From the proof of \cite[Sect. 7.8 (Thm 7.26)]{GT} we have that for all $\epsilon >0$ there exists a constant $C$ such that for all $x\in M$ the space 
$H_2^q(B_\epsilon(x))$ is continuously embedded in $ C^{0,\gamma}(\ov{B_\epsilon(x)})$  
\end{rem}

At the end we give a lemma which shows that solutions of the considered Euler-Lagrange equations have a maximum:

\begin{lem}\label{max} Let $(M,g)$ be a manifold of bounded geometry. Let $v\in H_1^2$ be a solution of $Lv=c\rho^{\alpha p} v^{p-1}$ with $\Vert \rho^\alpha v\Vert_p=1$.  
For $p< p_{crit}$, $v$ is continuous and $\lim_{|x|\to \infty} |v(x)|=0$.\\
Assume additionally that $v\in L^\infty$. Then we get the same also for $p=p_{crit}$.
\end{lem}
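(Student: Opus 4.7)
\emph{Strategy.} The plan is to combine the local elliptic machinery of Remark~\ref{est} with the global integrability $v\in L^{p_{crit}}(M)$, which follows from $v\in H_1^2$ via the continuous Sobolev embedding $H_1^2\hookrightarrow L^{p_{crit}}$ on manifolds of bounded geometry. Concretely, the target is a local estimate of the form
\[\|v\|_{L^\infty(B_\epsilon(x))} \leq \Phi\bigl(\|v\|_{L^{p_{crit}}(B_{2\epsilon}(x))}\bigr),\]
valid uniformly in $x\in M$, with $\Phi$ continuous, nondecreasing and $\Phi(0)=0$. Since $v\in L^{p_{crit}}(M)$ forces $\int_{B_{2\epsilon}(x)} v^{p_{crit}}\vo_g\to 0$ as $|x|\to\infty$, such an estimate immediately delivers local boundedness --- whence continuity, by one further round of the local theory now that the right-hand side is bounded --- and the decay $|v(x)|\to 0$.

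\emph{The critical case.} Fix $\epsilon<\tfrac12\,\text{inj}(M)$ and choose $q$ large enough that $q>n/2$ and $q\geq p_{crit}$. The inner $L^q$-estimate of Remark~\ref{est}.i, composed with the embedding $H_2^q\hookrightarrow C^{0,\gamma}(\overline{B_\epsilon(x)})$ of Remark~\ref{est}.ii (both with constants independent of $x$ by bounded geometry), together with $0<\rho\leq 1$, gives
\[\|v\|_{L^\infty(B_\epsilon(x))} \leq C\bigl(\|v\|_{L^q(B_{2\epsilon}(x))} + \|v^{p-1}\|_{L^q(B_{2\epsilon}(x))}\bigr).\]
For $p=p_{crit}$, $v\in L^\infty(M)$ is a hypothesis, so the interpolations
\[\int_{B_{2\epsilon}(x)} v^q\vo_g \leq \|v\|_\infty^{q-p_{crit}}\!\int_{B_{2\epsilon}(x)} v^{p_{crit}}\vo_g, \qquad \int_{B_{2\epsilon}(x)} v^{q(p-1)}\vo_g \leq \|v\|_\infty^{q(p-1)-p_{crit}}\!\int_{B_{2\epsilon}(x)} v^{p_{crit}}\vo_g,\]
whose exponents are nonnegative by the choice of $q$, reduce the right-hand side to a constant multiple of $\bigl(\int_{B_{2\epsilon}(x)} v^{p_{crit}}\vo_g\bigr)^{1/q}$, giving the required $\Phi$.

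\emph{The subcritical case and the main obstacle.} For $p<p_{crit}$ the $L^\infty$-control is not part of the hypotheses and must be produced first by a bootstrap. Starting from $v\in L^{p_{crit}}$, the identity $\|v^{p-1}\|_{L^s(B)}=\|v\|_{L^{s(p-1)}(B)}^{p-1}$, combined with Remark~\ref{est}.i and the local Sobolev embedding $H_2^s\hookrightarrow L^{s^{**}}$, upgrades the local integrability of $v$ at every step; the strict subcritical gap $p<p_{crit}$ ensures that the integrability exponent genuinely increases, and after finitely many iterations on a shrinking chain of balls inside $B_{2\epsilon}(x)$ one lands above $n/2$, yielding a quantitative local $L^\infty$-bound in terms of $\|v\|_{L^{p_{crit}}(B_{2\epsilon}(x))}$ that vanishes with it. Once this is in hand, the argument of the preceding paragraph applies verbatim. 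The main obstacle is exactly to make this bootstrap genuinely quantitative and uniform in the base point $x\in M$ on an open manifold; this is precisely what the $x$-independent constants of Remark~\ref{est} --- granted by bounded geometry --- are designed to provide, with the subcritical gap $p_{crit}-p>0$ fixing the (finite) number of bootstrap steps in terms of $n$ and $p$ alone.
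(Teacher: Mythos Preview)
Your argument is correct and rests on the same ingredients as the paper's proof: the uniform local $L^q$-estimate of Remark~\ref{est}.i, the embedding of Remark~\ref{est}.ii, and a bootstrap that works precisely because $p<p_{crit}$ (and that is bypassed when $p=p_{crit}$ using the assumed $L^\infty$-bound). The difference lies in how the decay at infinity is extracted. The paper does not track a vanishing bound: it only shows that the bootstrap yields a uniform (in the base point) $C^{0,\gamma}$-bound, and then argues by contradiction --- a sequence $x_i\to\infty$ with $v(x_i)\geq V>0$ would, by the H\"older estimate on small balls and the global integrability $v\in L^p$, force $V\leq c_\gamma\delta^\gamma$ for any $\delta>0$. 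Your route is more quantitative: you keep the bootstrap localized to a shrinking chain of balls and observe that every constant at every step is polynomial in the input local norm, so the composite map $\Phi$ satisfies $\Phi(0)=0$; the decay then follows directly from $\int_{B_{2\epsilon}(x)}v^{p_{crit}}\to 0$. Both are valid; yours avoids the contradiction and gives a sharper statement, at the price of tracking the dependence through the iteration, while the paper's version only needs a uniform constant and is therefore slightly less bookkeeping-heavy. One cosmetic point: for the embedding $H_2^q\hookrightarrow C^0$ you invoke in the critical case, Remark~\ref{est}.ii as stated asks for $q>n$ rather than $q>n/2$; since you are free to choose $q$ large, simply take $q>n$ there.
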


\begin{proof} Let $\epsilon\in (0, \frac{1}{2}\text{inj}(M))$. 
Assume that there exist a constant $V>0$ and a sequence $x_i\in M$ with $v(x_i)\geq V$ and $\text{dist}(x_i,p)\to \infty$ with $\text{dist}(x_i,x_j)>2\epsilon$ for fixed $p\in M$. We set $B_i=B_\epsilon(x_i)$, $B_{i,2}=B_{2\epsilon}(x_i)$. Then, the interior $L^p$-estimates from above give $\Vert v\Vert_{H_2^q(B_i)}\leq C_\epsilon(q) (\Vert v\Vert_{L^q(B_{i,2})} +\Vert \rho^{\alpha p} v^{p-1}\Vert_{L^q(B_{i,2})})$.\\ 
Moreover, the Sobolev embedding in Theorem \ref{embcon} shows that $v\in L^p$. From $\rho^\alpha v\in L^p$, $Lv=c\rho^{\alpha p} v^{p-1}$ and $0\leq \rho \leq 1$, we obtain $Lv \in L^{q_1}$ with $q_1=\frac{p}{p-1}$. The Schauder estimate above gives $v\in H_2^{q_1}(B_i)$ with $\Vert v\Vert_{H_2^{q_1}(B_i)}\leq CC_\epsilon$. Then the Sobolev embedding give $\Vert v\Vert_{L^{p_1}(B_i)}\leq C_\epsilon CC'$ with $p_1=\frac{nq_1}{n-q_1}$ and where $C'$ is the constant appearing in the corresponding Sobolev embedding. By a bootstrap argument we obtain a $q>n$ that $\Vert v\Vert_{H_2^q(B_i)}\leq K(q)$ where the constant $K(q)$ depends on $q$ but not on $i$. This bootstrap works since $p< p_{crit}$. Thus, with Remark \ref{est}.ii we get that $\Vert v\Vert_{C^{0,\alpha}(\ov{B_i})}\leq c_\alpha$ where $c_\alpha$ is independent of $i$ and $0< \alpha \leq 1-\frac{n}{q}$.\\
From Theorem \ref{embcon} we get from $v\in H_1^2$ that $v\in L^p$. Thus, \[\infty >\Vert v \Vert_p \geq \sum_i \Vert v\Vert_{L^p(B_\delta (x_i))}\geq  K \sum_i \min_{x\in B_\delta (x_i)} v(x) \] where $K^p=\inf \vol(B_\delta (x_i))$ and $\delta \leq \epsilon$. Thus, $\min_{x\in B_\delta (x_i)} v(x)\to 0$ as $i\to \infty$. But we know that on each $B_\delta (x_i)$ we have $|v(x)-v(y)|\leq c_\alpha|x-y|^\alpha\leq c_\alpha\delta^\alpha$. Thus in the limit for $i\to \infty$ we get $V\leq c_\alpha\delta^\alpha$. Choosing $\delta$ small enough we have a contradiction. Thus, $\limsup_{|x|\to \infty} v(x)=0$.

Let now $p=p_{crit}$ and $v\in L^\infty$. Then, together with a uniform upper bound for $\vol(B_{2,i})$, we can use directly Remark \ref{est}.i for a $q>n$  to obtain that $\Vert v \Vert_{H_2^q(B_i)}$ is uniformly bounded. Then the argument goes on as above. 
\end{proof}

\end{appendix}

\bibliographystyle{acm}
\bibliography{YamEL}

\end{document}